\documentclass[12pt]{amsart}

\usepackage{amscd}
\usepackage{amsmath, amssymb}
\usepackage{amsfonts}
\usepackage{mathrsfs}
\newcommand{\de}{\partial}

\newcommand{\ddb}{\partial \ov{\partial}}

\newcommand{\ddbar}{\sqrt{-1} \partial \overline{\partial}}

\newcommand{\ov}[1]{\overline{#1}}

\newcommand{\ti}[1]{\tilde{#1}}

\newcommand{\vol}{\mathrm{Vol}}

\newcommand{\ve}{\varepsilon}

\renewcommand{\leq}{\leqslant}
\renewcommand{\geq}{\geqslant}

\newcommand{\be}{\begin{equation}}
\newcommand{\ee}{\end{equation}}

\begin{document}
\newcounter{remark}
\newcounter{theor}
\setcounter{remark}{0}
\setcounter{theor}{1}
\newtheorem{claim}{Claim}
\newtheorem{theorem}{Theorem}[section]
\newtheorem{lemma}[theorem]{Lemma}
\newtheorem{corollary}[theorem]{Corollary}
\newtheorem{proposition}[theorem]{Proposition}
\newtheorem{question}{question}[section]
\newtheorem{defn}{Definition}[theor]
\newtheorem{case}{Case}[theor]
\numberwithin{equation}{section}

\title[Scalar V-soliton equation]{Scalar $V$-soliton equation and K\"ahler-Ricci flow on symplectic quotients}

\author{Chang Li}
\address{School of Mathematical Sciences, Peking University, Beijing, P.R.China, 100871}
\email{chang\_li@pku.edu.cn}

\begin{abstract}
In this paper, we consider the $V$-soliton equation which is a degenerate fully nonlinear equation introduced by La Nave and Tian in their work on K\"ahler-Ricci flow on symplectic quotients. One can apply the interpretation to study finite time singularities of the K\"ahler-Ricci flow. As in the case of K\"ahler-Einstein metrics, we can also reduce the $V$-soliton equation to a scalar equation on K\"ahler potentials, which is of Monge-Amp\`ere type. We formulate some preliminary estimates for such a scalar equation on a compact K\"ahler manifold $M$.
\end{abstract}
\maketitle

\section{Introduction}
The Ricci flow was introduced by R. Hamilton in 1982 \cite{H1}. Hamilton conjectured that the flow would break the manifold into pieces at the singular times, and initiated a program of ¡®Ricci flow with surgeries¡¯  \cite{H2}. The method of Ricci flow with surgery has become a very powerful tool to study the topology and geometric structures of Riemannian manifolds. In \cite{P1} \cite{P2} \cite{P3}, Perelman's ground-breaking work refined Hamilton's surgery process and proved Thurston's geometrization conjecture for 3-manifolds.

There has been much interest in studying the phenomenon in the case of K\"{a}hler manifolds. Let $(X,\omega)$ be an $m$-dimensional compact K\"{a}hler manifold. We denote a K\"{a}hler metric by its K\"{a}hler form $\omega$. Because the Ricci flow preserves the K\"{a}hler condition, it leads to the K\"{a}hler-Ricci flow as follows:
\begin{equation} \label{Kahler Ricci flow}
\frac{\partial}{\partial t} \omega=-\operatorname{Ric}(\omega), \quad\left.\omega\right|_{t=0}=\omega_{0},
\end{equation}
where $\omega_{0}$ is any given K\"{a}hler metric and $\mathrm{Ric}(\omega)$ denotes the Ricci form of $\omega$.
As long as the flow exists, the K\"{a}hler class of $\omega(t)$ is given by
\begin{equation}
[\omega(t)]=\left[\omega_{0}\right]+t c_{1}\left(K_{X}\right)>0,
\end{equation}
where $K_{X}$ is the canonical bundle of $X$. The Ricci flow always has a solution for $t$ small. In \cite{TZha}, Tian-Zhang gave a sharp local existence for \eqref{Kahler Ricci flow}. For any initial K\"ahler metric $\omega_{0}$, the flow \eqref{Kahler Ricci flow} has a maximal solution $\omega_{t}$ on $X \times [0, T_{max})$, where
\begin{equation}
T_{\max }=\sup \left\{t \in \mathbb{R} | \left[\omega_{0}\right]+t [K_{X}]>0\right\}.
\end{equation}

The K\"ahler-Ricci flow has been studied extensively and become a powerful tool in K\"ahler geometry. Cao \cite{Cao} studied the K\"ahler-Ricci flow and proved that the K\"{a}hler-Ricci flow always converges exponentially fast to a K\"{a}hler-Einstein metric if the first Chern class is negative or zero. Using this, Cao gave an alternative proof of the Calabi conjecture which was first proved by Yau \cite{Yau78}.
If the first Chern class is positive and $T$ is finite, the behavior of $g(t)$ as $t$ tends to $T$ has been extensively studied (see \cite{P4, TZhu1, PSSW, SeT, CS, TiZ1, CW, B} etc.). In the cases that the K\"{a}hler manifolds do not admit definite or vanishing first Chern class, the flow will in general develop singularities. Song-Tian \cite{ST1, ST2, ST3} initiated an Analytic Minimal Model Program through Ricci flow which is parallel to Mori's birational minimal model program (see \cite{BCHM, HM}). The study of formation of singularities along the K\"{a}hler-Ricci flow is a crucial problem in this program. In \cite{ST3}, it was conjectured that the K\"ahler-Ricci flow will either deform a projective algebraic variety to its minimal model via finitely many divisorial contractions and flips in the Gromov-Hausdorff sense, and then converge (after normalization) to a generalized K\"ahler-Einstein metric on its canonical model, or collapse in finite time (see also \cite{Tian1}).
This extended earlier conjectures of Feldman-Ilmanen-Knopf \cite{FIK}, which were established in \cite{SW1, CT}.
These surgeries were proved to be continuous in the Gromov-Hausdorff topology for K\"ahler surfaces in \cite{SW2, SW3}. Higher dimensional metric surgeries via the K\"ahler-Ricci flow are constructed for certain families of projective manifolds in \cite{S-Y, S1}. For more details of the behavior of the K\"{a}hler-Ricci flow with finite time singularity, see \cite{Zhang1, SSW, Fong1, Song1, Zhang2, Fong2, FZ, TZ}.

In \cite{LT}, La Nave and Tian studied the K\"{a}hler-Ricci flow through singularities in the frame work of K\"{a}hler quotients by identifying the K\"{a}hler-Ricci flow with a degenerate fully nonlinear equation on the resolution. The authors proposed an interpretation of the K\"{a}hler-Ricci flow on a manifold $X$ as an exact elliptic equation of Einstein type on a manifold $M$ of which $X$ is one of the (K\"{a}hler)
symplectic reductions via a (non-trivial) torus action. Note that a large class of birational transformations can be constructed through symplectic quotients. Then their interpretation may reduce studying singularities of the
K\"{a}hler-Ricci flow on quotients to studying an elliptic problem on $M$.

More precisely, let $(M, g)$ be a K\"{a}hler manifold of dimension $n\geq2$ which admits a Hamiltonian $S^{1}$-action by holomorphic automorphisms and let $V$ be the vector field generating such an action.
Let $\mu:M\rightarrow \mathbb{R}$ be a moment map or a Hamiltonian function for this action, i.e.
\begin{equation}
i_{V} \omega_{g}=\sqrt{-1} d \mu.
\end{equation}
We will call $\tilde{g}$ a V-soliton metric if it is K\"{a}hler and satisfies:
\begin{equation}\label{V-soliton metric}
\operatorname{Ric}(\tilde{g})+\frac{\sqrt{-1}}{2} \partial \overline{\partial}\left(\log \left(|V|_{\tilde{g}}^{2}\right)+f \cdot \mu\right)=\lambda \omega_{\tilde{g}},
\end{equation}
where $f$ is a function on $\mathbb{R}$. This can be regarded as a generalization of K\"{a}hler-Einstein metrics or K\"{a}hler-Ricci solitons.

Let $S(\mu) \subset \mathbb{R}$ be the set of singular values. It is closed, so the set $R(\mu) :=\mathbb{R} \backslash S(\mu)$ of regular values is a disjoint union of open intervals. For any regular value $\tau$, $\mu^{-1}(\tau)$ is smooth and $S^{1}$ acts freely on it, so the symplectic quotient $X_{\tau}=\mu^{-1}(\tau) / S^{1}$ is a smooth manifold. Moreover, since $S^{1}$ acts on $M$ by holomorphic automorphisms, $X_{\tau}$ has an natural complex structure $J_{\tau}$ induced from the one on $M$. The restriction of $\tilde{g}$ to $\mu^{-1}(\tau)$ descends to a K\"{a}hler metric $\tilde{g} _ \tau$ on $X_{\tau}$. Hence, $\left(X_{\tau}, \tilde{g}_{\tau}\right)$ is a K\"{a}hler manifold. If $\tau \in\left[\tau_{0}, \tau_{1}\right] \subset R(\mu)$, then all these $X_{\tau}$ are biholomorphic to each other, we may assume they all biholomorphic to a fixed complex manifold $X$. However, induced K\"{a}hler metrics $\tilde{g}_{\tau}$ depend on $\tau$. We consider  K\"{a}hler metrics which are invariant under the $S^{1}$-action. The main theorem in \cite{LT} is as follows:

\begin{theorem}
If $\omega_{\tilde{g}}$ is a V-soliton metric on $M$ and invariant under the $S^{1}$-action, then for $\tau \in\left[\tau_{0}, \tau_{1}\right]$, $\omega_{\tilde{g}_{\tau(t)}}$ on $X = X_{\tau}$ is a solution of \eqref{Kahler Ricci flow}, where $\tau (t) = ct$ for an appropriate constant $c > 0$. Conversely, a solution $\omega_{t}$ of \eqref{Kahler Ricci flow} can be lifted to be a V-soliton on an open subset $U$ of a certain $M$ with a $S^{1}$-action and a Hamiltonian $\mu$ such that $\mu(U) \subset R(\mu)$.
\end{theorem}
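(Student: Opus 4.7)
The plan is to translate the V-soliton equation on $M$ into an equation on the family of symplectic reductions $X_{\tau}=\mu^{-1}(\tau)/S^{1}$ and identify it, after a linear reparameterization of time, with the K\"ahler-Ricci flow on $X$. Working in local $S^{1}$-adapted holomorphic coordinates near a regular orbit, an $S^{1}$-invariant K\"ahler form $\omega_{\tilde g}$ admits a Duistermaat--Heckman-type decomposition into a horizontal piece descending to the reduced form $\omega_{\tilde g_{\tau}}$ on $X_{\tau}$, a vertical piece in the $d\mu\wedge(d\theta+\alpha_{\tau})$-direction controlled by $|V|_{\tilde g}^{-2}$, and off-diagonal blocks recording the connection $1$-form $\alpha_{\tau}$ on the principal circle bundle $\mu^{-1}(\tau)\to X_{\tau}$.

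The key algebraic input is the determinant factorization $\det\tilde g\sim\det\tilde g_{\tau}/|V|_{\tilde g}^{2}$, which translates to the Ricci-reduction formula
\begin{equation*}
\Ric(\tilde g)\big|_{X_{\tau}}\ =\ \Ric(\tilde g_{\tau})\ -\ \frac{\mn}{2}\,\ddb\log|V|_{\tilde g}^{2}\big|_{X_{\tau}}.
\end{equation*}
Substituting into \eqref{V-soliton metric} and restricting both sides to $X_{\tau}$, the $\log|V|_{\tilde g}^{2}$ contributions cancel and one is left with
\begin{equation*}
\Ric(\tilde g_{\tau})\ +\ \frac{\mn}{2}\,\ddb(f\cdot\mu)\big|_{X_{\tau}}\ =\ \lambda\,\omega_{\tilde g_{\tau}}.
\end{equation*}
The closedness $d\omega_{\tilde g}=0$ combined with the decomposition above yields the Duistermaat--Heckman variation formula $\partial_{\tau}\omega_{\tilde g_{\tau}}=d\alpha_{\tau}$, and this in turn lets one express $\ddb(f\cdot\mu)|_{X_{\tau}}$ as a linear combination of $\omega_{\tilde g_{\tau}}$ and $\partial_{\tau}\omega_{\tilde g_{\tau}}$ with coefficients determined by $f$ and $f'$. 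Choosing $c>0$ and the profile $f$ so that, after setting $\tau(t)=ct$, this combination matches $\lambda\,\omega_{\tilde g_{\tau}}-\Ric(\tilde g_{\tau})$, the equation becomes exactly
\begin{equation*}
\frac{\partial}{\partial t}\,\omega_{\tilde g_{\tau(t)}}\ =\ -\Ric(\tilde g_{\tau(t)}),
\end{equation*}
which is \eqref{Kahler Ricci flow} on $X$.

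For the converse, given a K\"ahler-Ricci flow solution $\omega_{t}$ on $X$, I would realize it as the family of reductions of a V-soliton built on the local model $M=X\times D\subset X\times\mathbb{C}$, with $S^{1}$-action $w\mapsto e^{\mn\theta}w$ and moment map $\mu(z,w)=\mathrm{Re}(w)$. Take $\pi^{\ast}\omega_{\mu/c}$ as the horizontal block of $\omega_{\tilde g}$ and add a radial K\"ahler potential $\tfrac{\mn}{2}\ddb\Psi(z,\bar z,w,\bar w)$ whose Hessian in $w$ renders the vertical block positive definite on the open set $U\subset M$ with $\mu(U)\subset R(\mu)$. Reversing the horizontal calculation then determines the unique profile $f$ and constant $\lambda$ for which \eqref{V-soliton metric} holds, and $S^{1}$-invariance is automatic from the construction.

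The principal obstacle is the careful accounting of the off-diagonal blocks $(\omega_{\tilde g})_{i\bar w}$ induced by the connection $\alpha_{\tau}$. These enter the determinant factorization as a Schur-complement correction, so the horizontal coordinate block of $\omega_{\tilde g}$ does not literally coincide with $\omega_{\tilde g_{\tau}}$, and $\ddb\log|V|_{\tilde g}^{2}$ acquires mixed contributions of the same type. One must show cleanly that these mixed pieces either cancel on the horizontal block or are absorbed by the $\tau$-variation through $d\omega_{\tilde g}=0$. Pinning down the exact relation among $f$, $c$, and $\lambda$ that makes every residual term vanish, and verifying $S^{1}$-invariance in both directions of the correspondence, constitute the core computational content of the theorem.
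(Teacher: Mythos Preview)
The paper does not contain a proof of this theorem. It is quoted in the introduction as ``the main theorem in \cite{LT}'' and serves purely as background motivation for the scalar $V$-soliton equation studied in the rest of the paper; no argument for it appears anywhere in the text. Consequently there is nothing in this paper to compare your proposal against.

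That said, a few remarks on your outline itself. The overall strategy---reduce $\Ric(\tilde g)$ to $\Ric(\tilde g_\tau)$ via the determinant factorization, cancel the $\log|V|_{\tilde g}^{2}$ term against the one already present in \eqref{V-soliton metric}, and then use the Duistermaat--Heckman variation $\partial_\tau\omega_{\tilde g_\tau}=d\alpha_\tau$ to convert the remaining $f\cdot\mu$ piece into a $\tau$-derivative---is indeed the shape of the La Nave--Tian argument. But two points are off. First, in the forward direction $f$ is \emph{given} as part of the $V$-soliton equation; you are only free to choose the reparameterization constant $c$, not the profile $f$, so the sentence ``Choosing $c>0$ and the profile $f$ so that\ldots'' has the dependencies reversed. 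Second, in your converse construction on $X\times D$ with the rotation action $w\mapsto e^{\sqrt{-1}\theta}w$, the moment map is (up to an additive constant) $|w|^{2}/2$, not $\mathrm{Re}(w)$; the function $\mathrm{Re}(w)$ is not $S^{1}$-invariant and cannot be a Hamiltonian for this action. Finally, as you yourself note, the Schur-complement bookkeeping for the off-diagonal connection terms is where all the actual work lies, and your proposal stops short of carrying it out, so what you have is a plausible plan rather than a proof.
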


\noindent This interpretation can be also extended to any symplectic quotients by more general groups. An holomorphic Hamiltonian action of a Lie group $G$ on a manifold $M$ comes with a moment map $\mu : M \rightarrow \mathcal{G}^{*}$ : For every coadjoint orbit in the dual Lie algebra of $G$, $\tau \subset \mathcal{G}^{*}$, there is a K\"{a}hler quotient $X_{\tau} :=\mu^{-1}(\tau) / G$, as above, we may have an elliptic equation on $M$ whose solutions can descend to solutions of the K\"{a}hler-Ricci flow on $X_{\tau}$.

Similarly to the case of K\"{a}hler-Einstein metrics, we can reduce \eqref{V-soliton metric} to a scalar equation on K\"{a}hler potentials, which is of Monge-Amp\`{e}re type. To be more explicit, we choose $g$ such that $c_{1} (M)$ coincides $\lambda\left[\omega_{g}\right]$. Then we can write $\omega_{\tilde{g}}$ as
$\omega_{\tilde{g}}=\omega_{g}+\frac{\sqrt{-1}}{2} \partial \overline{\partial} u$.
Consider the following
\begin{equation}\label{scalar V-soliton equation}
\left(\omega_{g}+\frac{\sqrt{-1}}{2} \partial \overline{\partial} u\right)^{n}=|V|_{\tilde{g}} e^{F-\lambda u} \omega^{n},
\end{equation}
where $F$ is a given smooth function satisfying
\begin{equation*}
\int_{M}\left(|V|_{g}^{2} e^{F}-1\right) \omega_{g}^{n}=0.
\end{equation*}
We call \eqref{scalar V-soliton equation} a scalar $V$-soliton equation.

One motivation for studying \eqref{scalar V-soliton equation} comes from establishing the existence of $V$-soliton metric.
If $F$ satisfies
\begin{equation*}
\operatorname{Ric}\left(g\right)-\lambda \omega_{g}=\sqrt{-1} \partial \overline{\partial}(F-f),
\end{equation*}
then $\tilde{g}$ is a $V$-soliton metric, i.e. $\tilde{g}$ satisfies \eqref{V-soliton metric}.

Since $V$ is generated by $S^{1}$-action, we obtain that $V$ is real. We can rewrite \eqref{scalar V-soliton equation} in a slightly different way: Let $Z=JV+\sqrt{-1} V$. Then $Z$ is a holomorphic vector field on $M$ and $|Z|_{\tilde{g}}^{2}=2|V|_{\tilde{g}}^{2}$.
Thus \eqref{scalar V-soliton equation} becomes
\begin{equation} \label{scalar V-soliton equation, Z}
\left(\omega_{g}+\frac{\sqrt{-1}}{2} \partial \overline{\partial} u\right)^{n}=\frac{1}{2}|Z|_{\tilde{g}} e^{F-\lambda u} \omega_{g}^{n}.
\end{equation}

We may use the perturbation method to solve \eqref{scalar V-soliton equation, Z}. Consider
\begin{equation}  \label{scalar V-soliton equation, Z,perturbation}
(\omega_g+\frac{\sqrt{-1}}{2} \partial \overline{\partial} u)^{n}=(|Z|_{\tilde{g}}^{2}+\ve)e^{F+c_{\ve}-\lambda u}\omega^{n},
\end{equation}
where $0 <\ve <1$, and $C_{\ve}$ is chosen such that
\begin{equation*}
\int_{M}\left(\left(\ve+|Z|_{g}^{2}\right) e^{F_{\ve}}-1\right) \omega^{n}=0,
\end{equation*}
where $F_{\ve}=F+c_{\ve}$.

Our main goal in this paper is to develop some a $priori$ estimates of solutions for this scalar $V$-soliton equation. For simplicity, we assume that $M$ is compact. Here we consider \eqref{scalar V-soliton equation, Z} only when $\lambda \leq 0$.

For the zero order estimate, it was proved by La Nave-Tian \cite{LT} when $\lambda=0$. When $\lambda=-1$, we adapt an approach of Tosatti-Weinkove \cite{TW}. We prove
\begin{proposition}\label{zero estimate negative lambda}
When $\lambda=-1$. Assume that there is a uniform constant $C_{0}$ such that $\inf_{M} u \leq C_{0}$ for any $S^{1}$-invariant solution $u$ for \eqref{scalar V-soliton equation, Z,perturbation}, then there is a uniform constant $C$ depending only on $Z, F, (M,\omega), C_0$ such that
\begin{equation}\label{c0 estimate}
\sup _{M}|u| \leq C.
\end{equation}
In particular, if $|\operatorname{div}(Z)|^2-\operatorname{Ric}(Z,Z) \geq 0$, then there is a uniform constant $C_{0}$ such that
\begin{equation}
\inf_{M} u \leq C_{0}
\end{equation}
for any $S^{1}$-invariant solution $u$. This implies the uniform estimate \eqref{c0 estimate}. Note that the condition $\operatorname{Ric} \leq 0$ is a special case.
\end{proposition}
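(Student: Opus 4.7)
The plan is to prove the two assertions of the proposition separately. For the conditional claim, I will adapt the integration-by-parts/Moser iteration scheme of Tosatti-Weinkove to upgrade the one-sided assumption $\inf_M u\leq C_0$ to a full $L^\infty$ estimate on $u$. For the curvature-based claim, I will apply a Bochner-type identity to the holomorphic vector field $Z$ on $(M,\tilde g)$. The recurring difficulty is the degeneracy of the prefactor $|Z|^2_{\tilde g}+\ve$ near the zero set of $Z$: a naive maximum principle at the max of $u$ yields only an $\ve$-dependent bound, so the argument must be fundamentally integrated rather than pointwise.

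Two uniform integral estimates form the backbone of the conditional part. First, the pointwise inequality $\tilde g\leq(\mathrm{tr}_g\tilde g)g$ (as Hermitian forms) gives $|Z|^2_{\tilde g}\leq(\mathrm{tr}_g\tilde g)|Z|^2_g$, and the cohomological identity $\int_M\mathrm{tr}_g\tilde g\,\omega^n=n\int_M\tilde\omega\wedge\omega^{n-1}=nV$ then yields $\int_M|Z|^2_{\tilde g}\,\omega^n\leq C$. Second, testing the equation against $\phi(u)=e^{-u}$ and integrating by parts, using closedness of $\omega,\tilde\omega$, positivity of $\mn\,\partial u\wedge\bar\partial u\wedge\sum_k\tilde\omega^k\wedge\omega^{n-1-k}$, and $\phi'(u)\leq 0$, gives
\begin{equation*}
\int_M e^{-u}\,\omega^n\leq\int_M e^{-u}\tilde\omega^n=\int_M(|Z|^2_{\tilde g}+\ve)e^{F+c_\ve}\,\omega^n\leq C,
\end{equation*}
a uniform $L^1$ bound on $e^{-u}$.

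With these at hand and the assumption $\inf u\leq C_0$, I would run a Moser iteration to bound $\sup_M u$. Multiplying the equation by $(u-C_0)_+^p$ (which vanishes on the nonempty set $\{u\leq C_0\}$) and integrating by parts yields $\int(u-C_0)_+^p\tilde\omega^n\leq\int(u-C_0)_+^p\omega^n$; substituting $\tilde\omega^n$ from the equation and using a Taylor expansion of $e^u$ on $\{u>C_0\}$, together with the integrated estimate on $|Z|^2_{\tilde g}$ (rather than the non-uniform pointwise lower bound $\ve$), produces a reverse-H\"older type recursion between $L^{p+q}$- and $L^p$-norms of $(u-C_0)_+$. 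Iterating $p\to\infty$---or equivalently invoking Kolodziej's $L^\infty$ pluripotential estimate once the RHS has been placed in $L^p$ for some $p>1$---gives $\sup_M u\leq C(Z,F,M,\omega,C_0)$. The matching lower bound $\inf_M u\geq -C$ follows by the analogous iteration with multiplier $e^{-pu}$, starting from the $L^1$ control of $e^{-u}$ above and using the already-established control of $\sup u$ to close the recursion.

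For the unconditional part, I apply the Bochner identity $\Delta_{\tilde g}|Z|^2_{\tilde g}=|\nabla Z|^2_{\tilde g}+\Ric_{\tilde g}(Z,\bar Z)$ for the holomorphic field $Z$. Logarithmic differentiation of the $V$-soliton equation relates $\Ric_{\tilde g}(Z,\bar Z)$ to $\Ric_g(Z,\bar Z)$, $\mathrm{div}(Z)$, and derivatives of $u$; under $|\mathrm{div}(Z)|^2-\Ric(Z,Z)\geq 0$ the critical curvature term acquires a favorable sign, so the maximum principle applied to an auxiliary function mixing $u$ with $\log(|Z|^2_{\tilde g}+\ve)$ yields a pointwise bound translating to $\inf_M u\leq C_0$. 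The case $\Ric(g)\leq 0$ is automatic since then $\Ric(Z,Z)\leq 0\leq|\mathrm{div}(Z)|^2$. The main obstacle is the $\ve$-uniform closure of the iteration in the conditional part: pointwise $(|Z|^2_{\tilde g}+\ve)\geq\ve$ is too weak, and one must exploit the integrated $L^1$ control of $|Z|^2_{\tilde g}$ throughout the bootstrap.
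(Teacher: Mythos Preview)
Your iteration for $\sup_M u$ has a genuine gap. After testing with $(u-C_0)_+^p$ you correctly obtain $\int_M (u-C_0)_+^p(|Z|^2_{\tilde g}+\ve)e^{F+u}\omega^n\leq\int_M(u-C_0)_+^p\,\omega^n$, but to extract a recursion you would need a \emph{lower} bound on the weight $(|Z|^2_{\tilde g}+\ve)$, whereas you have only derived an $L^1$ \emph{upper} bound on $|Z|^2_{\tilde g}$; that estimate sits on the wrong side of the inequality and cannot close the loop (nor does it place the right-hand side of the equation in $L^p$, since $e^u$ is not yet controlled, so the Kolodziej alternative is equally circular). The paper proceeds by a different mechanism. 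It first obtains $\inf_M u\geq -C$ by a one-line maximum principle at the minimum of $u$, using the pointwise bound $\sup_M|Z|^2_{\tilde g}\leq C$ which, for $\lambda=-1$, is available independently from Proposition~\ref{Laplacian estimate negative lambda}. Then, for $\sup_M u$, it uses $e^{-pu}$ as test function together with the key identity $|Z|^2_{\tilde g}=|Z|^2_g+\tfrac14\,JV(JV(u))$: the factor $|Z|^2_g$ is uniformly bounded, and the second-derivative term $JV(JV(u))$ is integrated by parts once against $e^{-pu+F+u}$; the resulting first derivative $JV(u)=Z(u)$ is uniformly bounded by Zhu's lemma for $S^1$-invariant $\omega$-psh functions. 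This yields the Cherrier-type inequality $\int_M|\nabla e^{-pu/2}|^2\omega^n\leq Cp^2\int_M e^{-(p-1)u}\omega^n$, and Moser iteration plus the hypothesis $\inf_M u\leq C_0$ and the Tosatti--Weinkove argument give $\sup_M u\leq C$.

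Your treatment of the curvature clause is also off target. No Bochner formula or auxiliary barrier is needed: one simply integrates the equation to get $\mathrm{Vol}(M,\omega)=\int_M(|Z|^2_{\tilde g}+\ve)e^{F+u}\omega^n\geq e^{-C+\inf_M u}\int_M|Z|^2_{\tilde g}\,\omega^n$, and then computes, via two integrations by parts using holomorphy of $Z$, that $\int_M|Z|^2_{\tilde g}\,\omega^n=\int_M|Z|^2_g\,\omega^n+\tfrac14\int_M u\bigl(|\mathrm{div}(Z)|^2-\mathrm{Ric}(Z,\bar Z)\bigr)\omega^n$. Under the hypothesis the bracket is nonnegative, so (assuming without loss $\inf_M u\geq 1$) one gets $\int_M|Z|^2_{\tilde g}\,\omega^n\geq\int_M|Z|^2_g\,\omega^n>0$, whence $\inf_M u\leq C_0$ follows immediately from the volume inequality.
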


For the second order estimates, we prove the following:

\begin{proposition}\label{Laplacian estimate}
There exists a constant $C$ depending only on $\| u\|_{C^{0}}$, $Z$, $F$ and $(M,\omega)$ such that for any solution $u$ for \eqref{scalar V-soliton equation, Z,perturbation}, we have
\begin{equation}
\sup_{M}\Delta u \leq C\sup_{M}|Z|_{\tilde{g}}^{2}+C.
\end{equation}
\end{proposition}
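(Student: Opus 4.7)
The strategy follows the Yau--Aubin maximum principle scheme for complex Monge--Amp\`ere equations, with additional work required to handle the factor $|Z|^2_{\tilde g}+\ve$ on the right-hand side of \eqref{scalar V-soliton equation, Z,perturbation}. Using the assumed $C^0$ bound on $u$, shift $u$ by a constant so that $\sup_M u = 0$. The plan is to bound the auxiliary function
\[
H \;=\; \log \tr{g}{\tilde g} \;-\; A\,u
\]
at a maximum point $p \in M$, where $A>0$ is chosen larger than a lower bound $-B$ for the bisectional curvature of $g$. Since $\tr{g}{\tilde g} = n + \Delta u$, an estimate $\tr{g}{\tilde g}(p) \leq C(1+\sup_M|Z|^2_{\tilde g})$ gives the proposition, the bound propagating to all of $M$ through $H$ and $\|u\|_{C^0} \leq C$.

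First I would invoke the Aubin--Yau second-order inequality: for $\omega_{\tilde g}^n = f\,\omega_g^n$ with $f := (|Z|^2_{\tilde g}+\ve)\,e^{F+c_\ve - \lambda u}$,
\[
\Delta_{\tilde g}\log \tr{g}{\tilde g} \;\geq\; \frac{\Delta_g \log f}{\tr{g}{\tilde g}} \;-\; B\,\tr{\tilde g}{g}.
\]
Combined with $\Delta_{\tilde g}u = n - \tr{\tilde g}{g}$ and $\Delta_{\tilde g}H(p)\leq 0$, and setting $A=B+1$, this yields
\[
\tr{\tilde g}{g}(p) \;\leq\; An \;-\; \frac{\Delta_g \log f}{\tr{g}{\tilde g}}\Big|_p.
\]
Decomposing $\log f = \log(|Z|^2_{\tilde g}+\ve) + F + c_\ve - \lambda u$: since $\lambda \leq 0$, the contribution $-\lambda \Delta_g u = |\lambda|(\tr{g}{\tilde g} - n)$ is nonnegative up to the constant $-|\lambda|n$, and $\Delta_g F$ is bounded by $\|F\|_{C^2}$. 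Everything therefore reduces to a uniform lower bound for $\Delta_g \log(|Z|^2_{\tilde g}+\ve)$ at $p$.

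The term $\Delta_g \log(|Z|^2_{\tilde g}+\ve)$ is the crux, because $|Z|^2_{\tilde g} = \tilde g_{i\ov j}\,Z^i\,\ov{Z}^j$ depends on $u$, and its naive expansion contains fourth derivatives of $u$. To bound it from below I would use the holomorphicity of $Z$: for any K\"ahler metric there is a Bochner-type formula expressing $\Delta_{\tilde g}|Z|^2_{\tilde g}$ in terms of $|\nabla Z|^2_{\tilde g}$ and $\Ric(\tilde g)(Z,\ov Z)$, quantities with no higher derivatives of $u$. Using $\Ric(\tilde g) = \Ric(g) - \ddbar \log f$ from the equation to eliminate $\Ric(\tilde g)$, the apparent fourth-order contribution is traded for curvature and trace terms of size $O(1+\tr{g}{\tilde g})$. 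The dangerous negative gradient piece $-|\partial|Z|^2_{\tilde g}|^2_g/(|Z|^2_{\tilde g}+\ve)^2$ requires the most care near possible zeros of $Z$; using the first-derivative max-point condition $\partial_k \log \tr{g}{\tilde g} = A\,\partial_k u$ together with a Cauchy--Schwarz manipulation, it can be absorbed into $\tr{\tilde g}{g}$. If needed, one may further include a small multiple of $\log(|Z|^2_{\tilde g}+\ve)$ in $H$ to make the absorption transparent. The outcome is
\[
\Delta_g \log(|Z|^2_{\tilde g}+\ve) \;\geq\; -C\bigl(1+\tr{g}{\tilde g}\bigr)
\]
at $p$, uniformly in $\ve$.

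Combining the steps yields $\tr{\tilde g}{g}(p) \leq C_*$ for a uniform constant $C_*$. Finally, from $\det(g^{-1}\tilde g) = f$ and Maclaurin's inequality applied to the eigenvalues of $g^{-1}\tilde g$, one has the elementary estimate $\tr{g}{\tilde g} \leq C\,f\,(\tr{\tilde g}{g})^{n-1}$, and therefore
\[
\tr{g}{\tilde g}(p) \;\leq\; C\,\bigl(|Z|^2_{\tilde g}(p)+\ve\bigr)\,C_*^{n-1} \;\leq\; C'\bigl(1+\sup_M|Z|^2_{\tilde g}\bigr),
\]
which is the desired bound. The main obstacle is the uniform control of $\Delta_g \log(|Z|^2_{\tilde g}+\ve)$ as $\ve \to 0$: the singularity of $\log$ near the zero set of $Z$ and the appearance of the gradient squared in the denominator must be neutralized by exploiting the holomorphicity of $Z$ in tandem with the first-order max-point condition on $H$.
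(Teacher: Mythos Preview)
Your overall architecture (Aubin--Yau quantity $H=\log\tr{g}{\tilde g}-Au$, maximum principle, then algebra with the determinant) matches the paper, but the crucial middle step --- controlling $\Delta_g\log(|Z|^2_{\tilde g}+\ve)$ --- has a real gap, and the paper handles it by a genuinely different device.

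\medskip
\textbf{The gap.} You need a lower bound on $\Delta_g\log(|Z|^2_{\tilde g}+\ve)$, i.e.\ the \emph{$g$-Laplacian}. The Bochner identity you invoke is for $\Delta_{\tilde g}|Z|^2_{\tilde g}$; switching from $\Delta_{\tilde g}$ to $\Delta_g$ on a quantity built from $\tilde g$ re-introduces the full Hessian $\partial_k\partial_{\bar l}|Z|^2_{\tilde g}$, which contains $u_{i\bar j k\bar l}Z^i\bar Z^j$ --- exactly the fourth-order term you were trying to avoid. Moreover, substituting $\Ric(\tilde g)=\Ric(g)-\ddbar\log f$ does not help: $\log f$ itself contains $\log(|Z|^2_{\tilde g}+\ve)$, so you land on $Z^i\bar Z^j\partial_i\partial_{\bar j}\log(|Z|^2_{\tilde g}+\ve)$, trading one uncontrolled second derivative of $\log(|Z|^2_{\tilde g}+\ve)$ for another. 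The first-order max-point condition gives information about $\partial_k\tr{g}{\tilde g}$, not about $\partial_k|Z|^2_{\tilde g}$, so it cannot absorb the gradient term $|\partial|Z|^2_{\tilde g}|^2_g/(|Z|^2_{\tilde g}+\ve)^2$ in the way you sketch. As written, the claimed inequality $\Delta_g\log(|Z|^2_{\tilde g}+\ve)\geq -C(1+\tr{g}{\tilde g})$ is not justified.

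\medskip
\textbf{What the paper does instead.} The key idea is to replace $\Delta_{\tilde g}$ by the modified (still elliptic) operator
\[
\Delta_{\tilde g_{H,\ve}}=\tilde g_{H,\ve}^{i\bar j}\partial_{\bar j}\partial_i,\qquad
\tilde g_{H,\ve}^{i\bar j}=\tilde g^{i\bar j}-\frac{Z^i\bar Z^j}{|Z|^2_{\tilde g}+\ve}.
\]
Applying it to $\tr{\omega}{\tilde\omega}$ and differentiating the equation twice, the rank-one subtraction produces an extra curvature term $g^{k\bar l}\dfrac{Z^i\bar Z^j}{|Z|^2_{\tilde g}+\ve}\tilde R_{i\bar j k\bar l}$ which pairs \emph{exactly} with $\Delta_g\log(|Z|^2_{\tilde g}+\ve)$; by Lemma~\ref{LT Lemma} (applied to $\tilde g$) their sum is nonnegative. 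This eliminates the bad term with no fourth-order residue. The endgame is also different from yours: the maximum principle yields only $\sum_i\tilde g_{H,\ve}^{i\bar i}=\tr{\tilde g}{g}-\dfrac{|Z|^2_g}{|Z|^2_{\tilde g}+\ve}\leq C$, which does \emph{not} bound $\tr{\tilde g}{g}$. Instead one deduces $\tilde g^{i\bar i}\leq C$ for $i\leq n-1$ only, then uses the equation to get $\prod_{i=1}^{n-1}\tilde g_{i\bar i}\leq C(|Z|^2_{\tilde g}+1)$ and hence $\tilde g_{1\bar 1}\leq C|Z|^2_{\tilde g}+C$.
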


\noindent In some special cases, we can get the Laplacian estimate.

\begin{proposition}\label{Laplacian estimate Positive Curvature}
When $(M,\omega)$ has positive holomorphic bisectional curvature, there exists a constant $C$ depending only on $\| u\|_{C^{0}}$, $Z$, $F$ and $(M,\omega)$ such that for any solution $u$ for \eqref{scalar V-soliton equation, Z,perturbation}, we have
\begin{equation}
\sup_{M}\Delta u \leq C.
\end{equation}
\end{proposition}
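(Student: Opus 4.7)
The plan is to adapt the Yau--Aubin Laplacian estimate to the perturbed Monge--Amp\`ere setting, using positive holomorphic bisectional curvature to absorb the contribution of the $|Z|_{\ti{g}}^2$ factor on the right-hand side. Since $u$ is a priori bounded by Proposition~\ref{zero estimate negative lambda}, it suffices to bound the test function $H := \log(n+\Delta u) - Au$ from above, where $A>0$ is to be chosen large. Let $p\in M$ be a maximum point of $H$, so that $\Delta_{\ti{g}}H(p)\le 0$.

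Taking $\log$ of \eqref{scalar V-soliton equation, Z,perturbation} gives $\log(\det\ti{g}/\det g) = \Phi$ where $\Phi := \log(|Z|_{\ti{g}}^2+\ve) + F + c_{\ve} - \lambda u$. The standard second-order inequality in the style of Yau's Laplacian estimate then reads, at $p$,
$$0 \ge \Delta_{\ti{g}}H(p) \ge \frac{\Delta_g\Phi}{n+\Delta u} + (A+C_{\mathrm{bis}})\,\mathrm{tr}_{\ti{g}}g - An - \frac{n^2 C_{\mathrm{bis}}}{n+\Delta u},$$
where $C_{\mathrm{bis}} := \inf_{i\ne j}R_{i\ov{i}j\ov{j}}(g)>0$ by the positive bisectional curvature hypothesis---so the usual ``bad'' curvature term now has the favorable sign. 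The only genuinely new contribution, $\Delta_g\log(|Z|_{\ti{g}}^2+\ve)$ appearing inside $\Delta_g\Phi$, is controlled using the holomorphicity of $Z$ (so that $\partial_{\ov{k}}Z^i=0$) together with Cauchy--Schwarz: a direct calculation in local coordinates yields a lower bound of the schematic form $-C(n+\Delta u)-C$, so that $\Delta_g\Phi/(n+\Delta u)\ge -C$. Rearranging produces $(A+C_{\mathrm{bis}})\,\mathrm{tr}_{\ti{g}}g(p)\le An+C$; fixing $A$ large yields $\mathrm{tr}_{\ti{g}}g(p)\le C$. Combining this bound with the identity $\det\ti{g}/\det g = e^\Phi$ via the arithmetic--geometric mean inequality, together with the $C^0$-bounds on $u$, $F$, and $|Z|_g^2$, pins down each eigenvalue of $\ti{g}$ with respect to $g$ and hence $(n+\Delta u)(p)$. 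Since $p$ is the global maximum of $H$ and $u$ is bounded, this gives $\sup_M(n+\Delta u)\le C$.

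The principal difficulty is the self-referential appearance of $|Z|_{\ti{g}}^2$ on the right-hand side of the Monge--Amp\`ere equation: without a curvature assumption, $\Delta_g\log(|Z|_{\ti{g}}^2+\ve)$ contributes derivatives of $u$ that can only be absorbed if one already has a strictly positive coefficient in front of $\mathrm{tr}_{\ti{g}}g$ that does not rely on $A$. This is precisely why Proposition~\ref{Laplacian estimate} only yields $\sup_M\Delta u\le C\sup_M|Z|_{\ti{g}}^2 + C$. The positive holomorphic bisectional curvature supplies exactly such a coefficient (namely $C_{\mathrm{bis}}$), allowing the a priori estimate to close without any residual dependence on $|Z|_{\ti{g}}^2$.
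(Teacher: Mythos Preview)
Your argument has a genuine gap at the step where you claim that ``a direct calculation in local coordinates yields a lower bound of the schematic form $-C(n+\Delta u)-C$'' for $\Delta_g\log(|Z|_{\ti g}^2+\ve)$. Since $|Z|_{\ti g}^2=\ti g_{k\ov l}Z^k\ov Z^l$ depends on $\ti g=g+\partial\ov\partial u$, differentiating twice with $\Delta_g$ produces the term $g^{i\ov i}(\partial_i\partial_{\ov i}\ti g_{k\ov l})Z^k\ov Z^l$, which contains the fourth-order quantity $u_{k\ov l i\ov i}$. More intrinsically, Lemma~\ref{LT Lemma} applied to $\ti g$ gives only
\[
\partial_k\partial_{\ov l}\log(|Z|_{\ti g}^2+\ve)\ \ge\ -\frac{\ti R(Z,\ov Z,\partial_k,\partial_{\ov l})}{|Z|_{\ti g}^2+\ve},
\]
so $\Delta_g\log(|Z|_{\ti g}^2+\ve)$ is bounded below by a term involving the full curvature tensor $\ti R$ of the \emph{unknown} metric $\ti g$. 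There is no way to dominate this by $C(n+\Delta u)$ using only holomorphicity of $Z$ and Cauchy--Schwarz; the third-order ``good'' terms in the standard Yau computation with the operator $\Delta_{\ti g}$ do not have the right structure to absorb it.

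This is exactly why the paper does not use $\Delta_{\ti g}$ but rather the modified linearized operator $\Delta_{\ti g_{H,\ve}}$ with $\ti g_{H,\ve}^{i\ov j}=\ti g^{i\ov j}-\dfrac{Z^i\ov Z^j}{|Z|_{\ti g}^2+\ve}$. Applying this operator to $\tr{\omega}{\ti\omega}$ produces, in addition to the usual $-g^{k\ov l}\ti R_{k\ov l}$, precisely the extra curvature correction $g^{k\ov l}\dfrac{Z^i\ov Z^j}{|Z|_{\ti g}^2+\ve}\ti R_{i\ov j k\ov l}$, and Lemma~\ref{LT Lemma} then makes the combined contribution $\Delta_g\log(|Z|_{\ti g}^2+\ve)+g^{k\ov l}\dfrac{\ti R(Z,\ov Z,\partial_k,\partial_{\ov l})}{|Z|_{\ti g}^2+\ve}$ nonnegative. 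The positive bisectional curvature hypothesis is used only afterwards, to upgrade the curvature-of-$g$ term to a coercive $D\sum_i\ti g_{H,\ve}^{i\ov i}\,\tr{\omega}{\ti\omega}$ and run a two-case argument at the maximum of $\tr{\omega}{\ti\omega}$ (no $-Au$ subtraction is needed). Your closing step also does not close as stated: from $\tr{\ti g}{g}\le C$ and $\det(\ti g)/\det(g)=(|Z|_{\ti g}^2+\ve)e^{F+c_\ve-\lambda u}$ one only obtains $\ti g_{1\ov 1}\le C'(|Z|_{\ti g}^2+1)\le C'|Z|_g^2\,\ti g_{1\ov 1}+C'$, and the implicit constant $C'$ cannot be made small enough to absorb the self-referential term. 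The paper circumvents this with a free parameter $B$ and a Newton-type inequality on the first $n-1$ eigenvalues.
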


\begin{proposition}\label{Laplacian estimate negative lambda}
When $\lambda =-1$, there exists a constant C depending only on $Z$, $F$ and $(M,\omega)$ such that for any solution $u$ for \eqref{scalar V-soliton equation, Z,perturbation}, we have
\begin{equation}
\sup_{M}|Z|_{\tilde{g}}^{2} \leq C.
\end{equation}
Hence, we have
\begin{equation}
\sup_{M}\Delta u \leq C',
\end{equation}
here $C'$ depending on $\| u\|_{C^{0}}$, $Z$, $F$ and $(M,\omega)$.
\end{proposition}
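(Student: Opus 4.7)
The plan is a maximum principle argument applied to an auxiliary function built from $|Z|^2_{\tilde{g}}$, making use of both the holomorphy of $Z$ and the favorable sign provided by $\lambda = -1$.

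First, since $Z$ is a holomorphic $(1,0)$ vector field, the K\"ahler Bochner identity gives $\tilde{\Delta}|Z|^2_{\tilde{g}} = |\tilde{\nabla}Z|^2_{\tilde{g}} - \operatorname{Ric}(\tilde{g})(Z,\bar Z)$, so at an interior maximum of $|Z|^2_{\tilde{g}}$ at $p$ we obtain $\operatorname{Ric}(\tilde{g})(Z,\bar Z)(p) \geq |\tilde{\nabla}Z|^2_{\tilde{g}}(p) \geq 0$. Next, taking $\sqrt{-1}\partial\bar\partial\log$ of \eqref{scalar V-soliton equation, Z,perturbation} with $\lambda = -1$ and using $\sqrt{-1}\partial\bar\partial u = 2(\omega_{\tilde{g}} - \omega_g)$ yields
$\operatorname{Ric}(\tilde{g}) + 2\omega_{\tilde{g}} = \operatorname{Ric}(g) + 2\omega_g - \sqrt{-1}\partial\bar\partial\log(|Z|^2_{\tilde{g}}+\varepsilon) - \sqrt{-1}\partial\bar\partial F,$
where the summand $+2\omega_{\tilde{g}}$ on the left is precisely the effect of $\lambda = -1$. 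Evaluating on $(Z,\bar Z)$ gives the key pointwise identity
$\operatorname{Ric}(\tilde{g})(Z,\bar Z) + 2|Z|^2_{\tilde{g}} = R_g(Z,\bar Z) + 2|Z|^2_g - Z^i\bar Z^j(\log(|Z|^2_{\tilde{g}}+\varepsilon))_{i\bar j} - Z^i\bar Z^j F_{i\bar j},$
in which every term on the right besides the log-Hessian is controlled by constants depending only on $Z$, $F$, $(M,\omega)$.

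Finally, I would apply the maximum principle to an auxiliary function such as $H = \log(|Z|^2_{\tilde{g}}+\varepsilon) + Au$ for a constant $A > 0$ to be tuned. At a maximum $p$ of $H$, the critical-point condition $\tilde{\nabla}\log(|Z|^2_{\tilde{g}}+\varepsilon) = -A\tilde{\nabla}u$ together with the negative semidefiniteness of $H_{i\bar j}(p)$, contracted against $Z^i\bar Z^j$, yields an upper bound on $Z^i\bar Z^j(\log(|Z|^2_{\tilde{g}}+\varepsilon))_{i\bar j}(p)$ in terms of $A$, $|Z|^2_g$ and $|Z|^2_{\tilde{g}}$. Feeding this bound into the paired Ricci identity and combining with the Bochner inequality, the crucial $+2|Z|^2_{\tilde{g}}$ absorbs the remaining terms and produces $|Z|^2_{\tilde{g}}(p) \leq C$, with $C = C(Z, F, (M,\omega))$. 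The Laplacian estimate then follows immediately from Proposition \ref{Laplacian estimate}.

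The main obstacle is sign management at the maximum point: the log-Hessian term $Z^i\bar Z^j(\log(|Z|^2_{\tilde{g}}+\varepsilon))_{i\bar j}$ has no a priori bound, and the auxiliary function $H$ must be engineered so that its critical-point identities convert this term into a quantity dominated by the favorable $+2|Z|^2_{\tilde{g}}$ produced by $\lambda = -1$. Without the $\lambda = -1$ hypothesis the sign in the Ricci identity flips and this approach collapses, consistent with the fact that Proposition \ref{Laplacian estimate} is formulated only for general $\lambda \leq 0$.
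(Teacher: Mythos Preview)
Your first two paragraphs are correct: the Bochner formula for the holomorphic field $Z$ and the Ricci identity obtained by differentiating the equation are both right (up to an irrelevant factor of $2$ in your normalization). You also correctly isolate the log-Hessian term $Z^{i}\bar Z^{j}(\log(|Z|^{2}_{\tilde g}+\varepsilon))_{i\bar j}$ as the only unbounded piece.

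The gap is in your third paragraph. Your auxiliary function $H=\log(|Z|^{2}_{\tilde g}+\varepsilon)+Au$ produces, at its maximum, the inequality
\[
Z^{i}\bar Z^{j}\bigl(\log(|Z|^{2}_{\tilde g}+\varepsilon)\bigr)_{i\bar j}\;\le\;-A\,Z^{i}\bar Z^{j}u_{i\bar j}\;=\;-A\bigl(|Z|^{2}_{\tilde g}-|Z|^{2}_{g}\bigr),
\]
which is an \emph{upper} bound on the log-Hessian. But in your Ricci identity this term appears with a minus sign, so what you actually need is a \emph{lower} bound. Plugging in your inequality yields $\operatorname{Ric}(\tilde g)(Z,\bar Z)\ge (A-1)|Z|^{2}_{\tilde g}-C$, which does not bound $|Z|^{2}_{\tilde g}$ from above. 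Moreover, you invoke the Bochner sign $\operatorname{Ric}(\tilde g)(Z,\bar Z)\ge 0$, but that is valid at the maximum of $|Z|^{2}_{\tilde g}$, not at the maximum of $H$; the two points need not coincide, so you cannot combine the two pieces of information as written.

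The paper bypasses the auxiliary function entirely. It applies the maximum principle to $|Z|^{2}_{\tilde g}$ itself, but with respect to the linearized operator
\[
\Delta_{\tilde g_{H,\varepsilon}}\;=\;\Bigl(\tilde g^{i\bar j}-\tfrac{Z^{i}\bar Z^{j}}{|Z|^{2}_{\tilde g}+\varepsilon}\Bigr)\partial_{i}\partial_{\bar j},
\]
which is still (degenerately) elliptic. The rank-one subtraction produces an extra term $\dfrac{\tilde R(Z,\bar Z,Z,\bar Z)}{|Z|^{2}_{\tilde g}+\varepsilon}$ in the computation, and this term combines with the troublesome log-Hessian exactly via Lemma~\ref{LT Lemma}, which asserts $\partial\bar\partial\log(|Z|^{2}_{\tilde g}+\varepsilon)+\dfrac{\tilde R(Z,\bar Z,\cdot,\cdot)}{|Z|^{2}_{\tilde g}+\varepsilon}>0$. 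After this cancellation one is left with $\Delta_{\tilde g_{H,\varepsilon}}|Z|^{2}_{\tilde g}\ge |Z|^{2}_{\tilde g}-C$, and the maximum principle finishes. The missing idea in your sketch is precisely this pairing of the log-Hessian with the holomorphic-bisectional term via Lemma~\ref{LT Lemma}; no choice of $A$ in $H$ can substitute for it.
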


The scalar V-soliton equation \eqref{scalar V-soliton equation} has been much less studied so far.
In \cite{G-L}, Guan-Li studied the following Monge-Amp\`ere type equation:
\begin{equation}
\left\{\begin{array}{ll}
{(\chi+\frac{\sqrt{-1}}{2} \partial \overline{\partial} u)^{n}=\psi (\chi+\frac{\sqrt{-1}}{2} \partial \overline{\partial} u) \wedge \omega^{n-1}} & {\text { on } M}, \\
{\chi+\frac{\sqrt{-1}}{2} \partial \overline{\partial} u>0} & {\text { on } M}, \\
{u=\varphi} & {\text { on } \partial M},
\end{array}\right.
\end{equation}
where $\chi$ is a smooth real (1,1)-form on $M$ and $\psi \in C^{\infty}(M)$ is a strictly positive function. The authors solved this Dirichlet problem on compact Hermitian manifolds admitting a subsolution.
One of the motivation which was important in leading the authors to this problem is the scalar V-soliton equation \eqref{scalar V-soliton equation}. In local coordinates, the equation above can be written as
\begin{equation}
\operatorname{det} (g_{i \bar{j}}+u_{i \bar{j}})=\frac{\psi}{n} (\operatorname{tr} \chi+\Delta u) \operatorname{det} g_{i \bar{j}},
\end{equation}
which is a similar form as \eqref{scalar V-soliton equation}.

The rest of the paper is organized as follows. In Section 2, we introduce some basic results, and notations, in K\"ahler geometry and symplectic quotients. We prove Proposition \ref{zero estimate negative lambda} in Section 3 and Proposition \ref{Laplacian estimate}--Proposition \ref{Laplacian estimate negative lambda} in Section 4.

\textbf{Acknowledgements}:
The author would like to thank his advisor Prof. Gang Tian for leading him to study this problem, sharing his unpublished notes with Gabriele La Nave, constant encouragement and support.
The author would also like to thank Prof. Jian Song for his helpful comments and suggestions.
Partial work was done while the author was visiting the Department of Mathematics at Rutgers University, supported by Graduate School of Peking University. The author would like to thank the Department of Mathematics at Rutgers University for their warm hospitality.
The author would also like to thank Jianchun Chu for many helpful discussions and very careful reading of the preprint.

\section{Preliminary}
In this section, we collect some preliminary results.
\subsection{Basic results and notation in K\"ahler geometry}
Let $(M, g, J)$ be a K\"ahler manifold of dimension $n$. Here $J$ is the induced almost complex structure on $M$ and $g$ is the Hermitian metric compatible with $J$. We denote $\omega=\omega_{g}$ the K\"ahler form of $g$. Recall that $g$ is K\"ahler if $\omega$ is closed.

The complexified tangent bundle $T^{\mathbb{C}} M=T M \otimes \mathbb{C}$ has a natural splitting
\begin{equation*}
T^{\mathbb{C}} M=T^{1,0} M+T^{0,1} M,
\end{equation*}
where $T^{1,0} M$ and $T^{0,1} M$ are the eigenspaces of $J$, corresponding to eigenvalues $\sqrt{-1}$ and $-\sqrt{-1}$ respectively. The metric $g$ is obviously extended $\mathbb{C}$-linearly to $T^{\mathbb{C}} M$.
Let $\nabla$ be the Chern connection of $g$. Then it satisfies
\begin{equation*}
\nabla_{W}(g(X, Y))=g\left(\nabla_{W} X, Y\right)+g\left(X, \nabla_{W} Y\right).
\end{equation*}
The curvature tensor $R$ of $\nabla$ is defined by
\begin{equation*}
\begin{split}
R(X, Y) U={}& \nabla_{X} \nabla_{Y} U-\nabla_{Y} \nabla_{X} U-\nabla_{[X, Y]} U,\\[2mm]
R(X, Y, U, W)={}& g\left(R(X,Y)W, U\right).
\end{split}
\end{equation*}
Because $J$ is parallel, we see that
\begin{equation*}
\begin{split}
R(X, Y) J U={}& J R(X, Y) U,\\[2mm]
R(X,Y,JU,JW)={}& R(X,Y,U,W).
\end{split}
\end{equation*}
Given $X, Y \in T_{p}^{1,0} M \backslash\{0\}$, the holomorphic bisectional curvature of $\omega$ at $p$, determined by $X, Y$ is defined to be
\begin{equation*}
BK(X, Y) :=\frac{R(X, \overline{X}, Y, \overline{Y})}{|X|_{\omega}^{2}|Y|_{\omega}^{2}}.
\end{equation*}

Under the local complex coordinates $(z^{1}, \cdots, z^{n})$, we can write
\begin{equation*}
\omega=\frac{\sqrt{-1}}{2}g_{i \overline{j}} d z^{i} \wedge d \overline{z}^{j},
\end{equation*}
where $g_{i \overline{j}}=g(\frac{\partial}{\partial z^{i}}, \frac{\partial}{\partial \overline{z}^{j}})$.
Then the connection $\nabla$ is given by
\begin{equation*}
\left\{\Gamma_{j k}^{i}=g^{i \overline{l}} \frac{\partial g_{j \overline{l}}}{\partial z^{k}}\right\}.
\end{equation*}
Hence, the curvature tensor $R$ is represented as
\begin{equation*}
\begin{split}
R_{i \overline{j} k \overline{l}}
={}& R\left(\frac{\partial}{\partial z^{i}}, \frac{\partial}{\partial \overline{z}^{j}}, \frac{\partial}{\partial z^{k}}, \frac{\partial}{\partial \overline{z}^{l}}\right)\\[2mm]
={}& -\frac{\partial^{2} g_{i \overline{j}}}{\partial z^{k} \partial \overline{z}^{l}}+g^{s \overline{t}} \frac{\partial g_{s \overline{j}}}{\partial z^{k}} \frac{\partial g_{i \overline{t}}}{\partial \overline{z}^{l}}.
\end{split}
\end{equation*}
We define the Ricci curvature $\operatorname{Ric}(\omega)=\frac{\sqrt{-1}}{2}R_{k \ov l} dz^{k} \wedge d \overline{z}^{l}$ to be the trace of $R$, so we get
\begin{equation*}
R_{k \overline{l}}=g^{i \overline{j}} R_{i \overline{j} k \overline{l}}=-\frac{\partial^{2}}{\partial z^{k} \partial \overline{z}^{l}}\left(\log \operatorname{det} g_{i \overline{j}}\right).
\end{equation*}
It is the same as the one in Riemannian geometry.
For a function $f \in C^{2}(M)$, $\partial \overline{\partial} f$ is given in local coordinates by
\begin{equation*}
\partial \overline{\partial} f=\frac{\partial^{2} f}{\partial z^{i} \partial \overline{z}^{j}} d z^{i} \wedge d \overline{z}^{j}.
\end{equation*}
We define the canonical Laplacian of $f$ respect to the Chern connection by
\begin{equation*}
\Delta f=\frac{\frac{\sqrt{-1}}{2} \partial \overline{\partial} f \wedge \omega^{n-1}}{\omega^{n}}=g^{i \overline{j}} \frac{\partial^{2} f}{\partial z^{i} \partial \overline{z}^{j}}.
\end{equation*}
For convenience we write
\begin{equation*}
f_{i}=\frac{\partial f}{\partial z^{i}}, \quad f_{\overline{i}}=\frac{\partial f}{\partial \overline{z}^{i}}, \quad f_{i \overline{j}}=\frac{\partial^{2} f}{\partial z^{i} \partial \overline{z}^{j}}, \quad etc.
\end{equation*}

Let $\Lambda^{p, q}(M)$ denote the $(p,q)$-form on $M$. Then the exterior differential $d$ can be decomposed as $d=\partial+\overline{\partial}$, where
\begin{equation*}
\partial : \Lambda^{p, q} \rightarrow \Lambda^{p+1, q}, \overline{\partial} : \Lambda^{p, q} \rightarrow \Lambda^{p, q+1}.
\end{equation*}
By the Stokes theorem, we have
\begin{equation*}
\int_{M} \partial \alpha=\int_{\partial M} \alpha, \quad \forall \alpha \in \Lambda^{n-1, n}
\end{equation*}

\subsection{K\"ahler quotients}
Let $(M,\omega)$ be a symplectic manifold of dimension $n$. Assume there is a Lie group $G$ of dimension $k$ acting symplectically on $M$, i.e., $G$ acts by symplectomorphisms. Denote by $SV(M, \omega)$ the set of symplectic vector field on $M$ and $HV(M, \omega)$ the Hamiltonian vector field on $M$. Let $\mathcal{G}$ be the Lie algebra of $G$ and $\mathcal{G}^{*}$ its dual space. For any $Y \in \mathcal{G}$, it may generates a vector field $\tilde{Y}$ on $M$ defined by
\begin{equation*}
\tilde{Y}(p)=\frac{d}{dt} \Big|_{t=0} \operatorname{exp}(tY) \cdot p \in T_{p}M.
\end{equation*}
We can define the comoment map and the moment map as follow:
\begin{defn}
If there exist a Lie algebra anti-homeomorphism $\tilde{\mu}: \mathcal{G} \rightarrow C^{\infty}(M)$ such that
\begin{equation*}
i_{\tilde{Y}} \omega=-d\tilde{\mu}(Y), \quad \forall Y \in \mathcal{G},
\end{equation*}
then we say the action of $G$ on $(M,\omega)$ is Hamiltonian and $\tilde{\mu}$ is called the comoment map.\\
\indent Let $\mu: M \rightarrow \mathcal{G}^{*}$ such that for all $p \in M$ and $Y \in \mathcal{G}$, $<\mu(p),Y>$ is the Hamiltonian function of $\tilde{Y}$, i.e.
\begin{equation*}
<\mu(p),Y>=\tilde{\mu}(Y)(p).
\end{equation*}
Then $\mu$ is called the moment map.
\end{defn}
\noindent Note that $\mu$ is $G$-equivatiant, i.e.
\begin{equation*}
\mu(\beta \cdot p)=Ad_{\beta}^{*}(\mu(p)), \quad \forall p \in M, \forall \beta \in G.
\end{equation*}
The existence of a comoment map is equivalent to the existence of a $G$-equivatiant moment map.

In these circumstances, one can perform the so-called symplectic quotient. Let $G$ acts freely on $M$. Assume the action of $G$ is hamiltonian. Then for any $\tau$ a $G$-orbit in $\mathcal{G}^{*}$, we can define the symplectic quotient to be
\begin{equation*}
X_{\tau} :=\mu^{-1}(\tau) / G.
\end{equation*}
By the Marsden-Weinstein-Meyer Reduction Theorem \cite{M-W-M1} \cite{M-W-M2}, we have that $X_{\tau}$ is a symplectic manifold of dimension $n-2k$ with symplectic form $\omega_{\tau}$ if $\tau$ is a regular value for $\mu$. Let $\pi_{\tau}:\mu^{-1}(\tau) \rightarrow \mu^{-1}(\tau) / G$ be the natural projection and $i:\mu^{-1}(\tau) \hookrightarrow M$ the natural inclusion, then
\begin{equation*}
i^* \omega=\pi_{\tau}^* \omega_{\tau}.
\end{equation*}
The pair $(X_{\tau},\omega_{\tau})$ is called the symplectic quotient of $(M,\omega)$ with respect to $G, \mu$, or the symplectic reduction, or the reduced space, or the Marsden-Weinstein-Meyer quotient, etc.

If $(M, \omega)$ is a K\"ahler manifold and $G$ acting symplectically on $M$ via holomorphic ismetries. These results carry though to the complex structure of the K\"ahler quotients. We can think of $\mathcal{G}$ as a sub-bundle of $T\mu^{-1}(\tau)$. In fact, for any $Y \in \mathcal{G}$, we can prove that $\tilde{Y} \in T(\mu^{-1}(\tau))$. Let $Q_{p}(\tau) \subset T_p \mu^{-1}(\tau)$ be the orthogonal complement of $\mathcal{G}$ with respect to $\omega$. Hence we have the orthogonal decomposition of the tangent space
\begin{equation*}
T_{p} M=Q_{p}(\tau) \oplus \mathcal{G}_{p} \oplus J \mathcal{G}_{p},
\end{equation*}
where $J$ is the complex structure.
It is easy to check that $Q(\tau)$ is $J$-invariant and $d\pi_{\tau}:Q(\tau)\rightarrow TX_{\tau}$ induces an isomorphism.

The complex structure $J$ also induces a complex structure on the K\"ahler reduction $X_{\tau}$ defined by
\begin{equation*}
d \pi_{\tau} \circ J=J_{\tau} \circ d \pi_{\tau}.
\end{equation*}
Moreover, if we consider the direct sum decomposition of
\begin{equation*}
Q(\tau) \otimes \mathbb{C}=Q(\tau)^{(1,0)} \oplus Q(\tau)^{(0,1)},
\end{equation*}
then $d \pi_{\tau}$ induces an isomorphism
\begin{equation*}
Q^{(1,0)}(\tau) \rightarrow T^{(1,0)} X_{\tau}.
\end{equation*}
One can check that the induced complex structure $J_{\tau}$ on $X_{\tau}$ is integrable. One can also prove that the complex structure does not change as long as the moment map does not cross critical values.
We define the natural Riemannian metric $g_{\tau}$ on $X_{\tau}$ by
\begin{equation*}
g_{\tau}\left(d \pi_{\tau}\left(U\right), d \pi_{\tau}\left(W\right)\right)=g\left(U, W\right), \quad \text { for all } U, W \in Q(\tau).
\end{equation*}
Then the metric $g_{\tau}$ is compatible with $J_{\tau}$ and $g_{\tau}$ is in fact K\"ahler.

For simplicity, we assume that $G=S^1$ and its Lie algebra is identified with $\mathbb{R}$. When $G$ is a torus, any level is preserved and quotient at $\tau$ for the moment map $\mu$, is equivalent to quotient at $0$ for a shifted moment map $\varphi: M \rightarrow \mathcal{G}^{*}$, $\varphi:= \mu(p)-\tau$. Let $V$ be the vector field generating such an action. It follows that $\mu^{-1}(t_1) / S^1$ and $\mu^{-1}(t_2) / S^1$ are biholomorphic to each other whenever $t_1$ and $t_2$ are in an interval which does not contain any critical values of $\mu$. The following lemma is due to La Nave-Tian \cite{LT}.
\begin{lemma}[L-T]
If $V$ has no zeroes in a neighborhood of $\mu^{-1}([a,a+t_0])$, then the 1-parameter group of diffeomorphisms $\phi_{t}: M \rightarrow M$ generated by the vector field $U=\frac{JV}{|V|^2_g}$ induces biholomorphisms $\tilde{\phi_t}:X_a \rightarrow X_{t+a}$ for $t \in [0, t_0]$.
\end{lemma}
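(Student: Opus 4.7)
The plan is to verify, in order: (i) $\phi_t$ carries $\mu^{-1}(a)$ into $\mu^{-1}(a+t)$; (ii) $\phi_t$ commutes with the $S^1$-action generated by $V$ and hence descends to a diffeomorphism $\tilde\phi_t:X_a\to X_{a+t}$; (iii) this descended map is holomorphic. The inverse is provided by $\phi_{-t}$, so these three facts together give the biholomorphism.

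Steps (i) and (ii) are routine. For (i), applying the moment-map equation to $U$ together with $\omega_g(V,JV)=\pm|V|_g^2$ gives $U(\mu)=1$, whence $\mu\circ\phi_t=\mu+t$. For (ii), the fact that $V$ is holomorphic yields $[V,JV]=0$, and the fact that $V$ is Killing yields $V(|V|_g^2)=0$; hence
\[
[U,V]=\frac{1}{|V|_g^2}[JV,V]-V\!\left(\frac{1}{|V|_g^2}\right)JV=0,
\]
so $\phi_t$ commutes with the $S^1$-flow and descends to a well-defined $\tilde\phi_t$.

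The main obstacle is (iii), because $U=JV/|V|_g^2$ is \emph{not} a real holomorphic vector field (the non-holomorphic factor $1/|V|_g^2$ destroys the property), so $\phi_t$ does not preserve the ambient $J$. The key claim is that its failure to preserve $J$ is confined to the complex-vertical plane $\mathrm{span}\{V,JV\}=\mathcal{G}_p\oplus J\mathcal{G}_p$, which is exactly the part of $TM$ annihilated by the projection to $TX_\tau\cong Q(\tau)$. To establish this I would use the identity $(\mathcal{L}_{fW}J)(Y)=f(\mathcal{L}_WJ)(Y)+Y(f)JW-JY(f)W$ together with $\mathcal{L}_{JV}J=0$ (a consequence of $\mathcal{L}_VJ=0$ and the integrability of $J$ via the Nijenhuis identity): with $W=JV$ and $f=1/|V|_g^2$ this gives
\[
(\mathcal{L}_UJ)(Y)=-Y(|V|_g^{-2})V-JY(|V|_g^{-2})JV\in\mathrm{span}\{V,JV\}.
\]
Moreover $[U,JV]=-JV(|V|_g^{-2})\,JV\in\mathbb{R}\cdot JV$, so $d\phi_t$ preserves the plane field $\mathrm{span}\{V,JV\}$. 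Integrating $\frac{d}{ds}\phi_s^*J=\phi_s^*(\mathcal{L}_UJ)$ from $0$ to $t$ then shows
\[
J\,d\phi_t(X)-d\phi_t(JX)\in\mathrm{span}\{V,JV\}\big|_{\phi_t(p)}
\]
for every $X\in T_pM$. Under the identification $TX_\tau\cong Q(\tau)$ and the splitting $T\mu^{-1}(a+t)=Q(a+t)\oplus\mathbb{R}V$, the right-hand side projects to zero in $TX_{a+t}$, which is precisely the statement that $d\tilde\phi_t$ is $\mathbb{C}$-linear, so $\tilde\phi_t$ is holomorphic.
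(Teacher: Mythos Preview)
The paper does not actually prove this lemma; it is quoted from La Nave--Tian \cite{LT} and stated without argument, so there is no in-paper proof to compare against. Your argument is sound and supplies what the paper omits. Two minor clarifications would make it airtight. First, in step (i) you should commit to a sign convention rather than writing $\pm|V|_g^2$: with $\omega(X,Y)=g(JX,Y)$ and $i_V\omega=d\mu$ one gets $d\mu(JV)=|V|_g^2$, hence $U(\mu)=1$. Second, the final sentence of step (iii) is a bit compressed. The difference $J\,d\phi_t(X)-d\phi_t(JX)=\alpha V+\beta JV$ may well have a nonzero $JV$-component; the point is rather that if one decomposes $d\phi_t(X)=Y+\gamma V$ with $Y\in Q(a+t)$, then $d\phi_t(JX)=JY-\alpha V+(\gamma-\beta)JV$, and since $d\phi_t(JX)\in T\mu^{-1}(a+t)$ while $JV$ is transverse to the level set, the coefficient $\gamma-\beta$ must vanish. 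Thus $d\phi_t(JX)\equiv JY\pmod{\mathbb{R}V}$, which is exactly $d\pi_{a+t}(d\phi_t(JX))=d\pi_{a+t}(JY)=J_{a+t}\,d\pi_{a+t}(d\phi_t X)$. With that line added, the proof is complete.
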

\noindent Then we get an one-parameter family of metrics on $X_a$, $h(\tau)=\phi_{\tau}^{*} g_{a+\tau}$, so long as there are no critical points of $\mu$ in $\{a+s b : 0 \leq s \leq 1\}$, where $\tau=s b$ and $g_{a+\tau}$ is the symplectic reduction of $g$ on $X_{a+\tau}$.

\section{Zero order estimate}

In this section, we give the proof of Propositon \ref{zero estimate negative lambda}.
First we recall a fact due to Zhu \cite{Zhu}

\begin{lemma}[Zhu]\label{Zhu's lem}
There is a uniform constant $C=C(\omega)$ such that for any $S^{1}$-invariant $u$ with $\omega+\frac{\sqrt{-1}}{2} \partial \overline{\partial} u \geq 0$, we have
\begin{equation}
|J V(u)| \leq C.
\end{equation}
\end{lemma}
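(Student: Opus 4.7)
The plan is to realize $JV(u)$ as (a constant multiple of) the difference between two moment maps for the same Hamiltonian $S^{1}$-action, one associated to $\omega$ and the other to $\tilde{\omega} := \omega + \frac{\sqrt{-1}}{2}\partial\bar\partial u$, and then to exploit the standard fact that the moment map of a Hamiltonian $S^{1}$-action on a compact connected symplectic manifold attains its extrema at fixed points of the action. The key identity I would first establish is that for any $S^{1}$-invariant $u$ one has
\[
i_V\!\left(\tfrac{\sqrt{-1}}{2}\partial\bar\partial u\right) \;=\; c\, d\bigl(JV(u)\bigr)
\]
for some fixed non-zero real constant $c$. Since $V$ generates a holomorphic $S^{1}$-action, $\mathcal{L}_V$ commutes with $\partial$ and $\bar\partial$; together with $V(u)=0$ this gives $\mathcal{L}_V(\partial\bar\partial u) = 0$, and Cartan's formula together with $d(\partial\bar\partial u)=0$ then imply that $i_V(\partial\bar\partial u)$ is closed. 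A direct computation in local holomorphic coordinates, splitting $V = V^{1,0} + V^{0,1}$ and using $V^{1,0}(u) = -V^{0,1}(u)$ (which follows from $V(u) = 0$), identifies this closed $1$-form explicitly as $c\,d(JV(u))$. Combined with the relation between $i_V\omega$ and $d\mu$, it follows that $\tilde{\mu} := \mu + c'\,JV(u)$ is a moment map for the $S^{1}$-action on $(M,\tilde{\omega})$ for some fixed real $c'\neq 0$.

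To apply the extremum theorem for moment maps I would first reduce to the case where $\tilde\omega$ is strictly positive. For $\varepsilon \in (0,1)$ set $u_{\varepsilon} := (1-\varepsilon) u$; then
\[
\omega + \tfrac{\sqrt{-1}}{2}\partial\bar\partial u_{\varepsilon} \;=\; (1-\varepsilon)\tilde{\omega} + \varepsilon\,\omega \;\geq\; \varepsilon\,\omega \;>\; 0
\]
is a genuine K\"ahler form, and $JV(u_{\varepsilon}) = (1-\varepsilon) JV(u)$, so a uniform bound $|JV(u_{\varepsilon})| \le C(\omega)$ will yield $|JV(u)|\le C(\omega)$ upon sending $\varepsilon \to 0$.

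For the strictly positive case, applying the extremum theorem to the K\"ahler manifold $(M,\tilde{\omega}_{\varepsilon})$ with moment map $\tilde{\mu}_{\varepsilon} = \mu + c'\,JV(u_{\varepsilon})$ shows that $\tilde{\mu}_{\varepsilon}$ attains its max and min on the fixed-point set $F = \{V=0\}$, and likewise for $\mu$ itself. At every $p \in F$ we have $JV(u_{\varepsilon})(p)=0$, so $\tilde{\mu}_{\varepsilon}(p) = \mu(p)$, and thus $\min_M\mu \le \tilde{\mu}_{\varepsilon} \le \max_M\mu$ throughout $M$. Consequently
\[
|JV(u_{\varepsilon})| \;=\; \tfrac{1}{|c'|}\,|\tilde{\mu}_{\varepsilon}-\mu| \;\le\; \tfrac{2}{|c'|}\bigl(\max_M\mu - \min_M\mu\bigr),
\]
and the right-hand side depends only on $(M,\omega)$. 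Sending $\varepsilon\to 0$ completes the argument.

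The main technical obstacle I foresee is the explicit computation in the first step: carefully identifying $i_V\!\left(\tfrac{\sqrt{-1}}{2}\partial\bar\partial u\right)$ as $c\,d(JV(u))$ for a definite real non-zero constant $c$, and tracking the conventional factors of $\sqrt{-1}$ and $1/2$ so that the resulting $\tilde{\mu}$ is a bona fide real-valued moment map for $\tilde{\omega}$. Once this identity is in hand, the rest of the argument is a direct application of the moment-map maximum principle on a compact K\"ahler manifold.
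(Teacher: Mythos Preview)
The paper does not give its own proof of this lemma; it simply records it as ``a fact due to Zhu'' and cites \cite{Zhu}. Your proposal is correct and is precisely the standard argument from Zhu's paper: one checks (using $V(u)=0$ and holomorphicity of $V^{1,0}$) that $i_V\bigl(\tfrac{\sqrt{-1}}{2}\partial\bar\partial u\bigr)=\tfrac14\,d(JV(u))$, so $\mu+\tfrac14 JV(u)$ is a moment map for $\tilde\omega$; since moment maps on a compact K\"ahler manifold have all their critical points on the fixed locus $\{V=0\}$, and the two moment maps agree there, they share the same range and $JV(u)$ is bounded by the oscillation of $\mu$. Your $\varepsilon$-regularization to pass from $\tilde\omega\ge 0$ to a genuine K\"ahler form is the right way to handle the semipositive case, and the constant you end up with (the oscillation of $\mu$, up to a universal factor) indeed depends only on $(M,\omega)$.
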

We prove Propsiton \ref{zero estimate negative lambda} by proving a Cherrier-type inequality and the lemmas in \cite{TW}.
\begin{proof}[Proof of Propositon \ref{zero estimate negative lambda}]
First, we prove $\inf_{M}u\geq-C$. Let $p$ be the minimum point of $u$. Using Proposition \ref{Laplacian estimate negative lambda} and the maximum principle, at $p$, we obtain
\begin{equation*}
0 \leq \log\frac{(\omega+\frac{\sqrt{-1}}{2} \partial \overline{\partial} u)^{n}}{\omega^{n}} = \log(|Z|_{\ti{g}}^{2}+\ve)+F+u \leq C+\inf_{M}u,
\end{equation*}
which implies $\inf_{M}u\geq-C$.

Second, we prove $\sup_{M} u \leq C$. Due to Tosatti and Weinkove's results, it sufficient to prove a Cherrier-type inequality.
Integrating by parts, we get for $p \geq 1$
\begin{equation}
\begin{split}
\int_{M} e^{-p u}\left(\tilde{\omega}^{n}-\omega^{n}\right)={}& \frac{\sqrt{-1}}{2}\sum_{k=1}^{n} \int_{M} e^{-p u} \partial \bar{\partial} u \wedge \tilde{\omega}^{n-k} \wedge \omega^{k-1}\\[2mm]
\geq{}& \frac{\sqrt{-1}}{2} \int_{M} e^{-p u} \partial \overline{\partial} u \wedge \omega^{n-1}\\[2mm]
={}& \frac{\sqrt{-1}}{2} \int_{M}-\partial\left(e^{-p u}\right) \wedge \overline{\partial} u \wedge \omega^{n-1}\\[2mm]
={}& \frac{\sqrt{-1}}{2} p \int_{M} e^{-p u} \partial u \wedge \bar{\partial} u \wedge \omega^{n-1}\\[2mm]
={}& \frac{\sqrt{-1}}{2} p \int_{M}\left(e^{-\frac{p}{2} u} \partial u\right) \wedge\left(e^{-\frac{p}{2} u} \overline{\partial} u\right) \wedge \omega^{n-1}\\[2mm]
={}& \frac{4}{p} \int_{M}\left|\nabla e^{-\frac{p}{2} u}\right|_{g}^{2} \omega^{n}.\\[2mm]
\end{split}
\end{equation}
Multiplying $e^{-pu}$ on both sides of \eqref{scalar V-soliton equation, Z,perturbation} and integrating, we deduce from the above
\begin{equation}
\begin{split}
\frac{4}{p} \int_{M}\left|\nabla e^{-\frac{p}{2} u}\right|_{g}^{2} \omega^{n} \leq{}& \int_{M} e^{-p u}\left(\tilde{\omega}^{n}-\omega^{n}\right)\\[2mm]
={}& \int_{M} e^{-p u}\left[\left(|Z|_{\tilde{g}}^{2}+\varepsilon\right) e^{F+u}-1\right]\omega^{n}.
\\[2mm]
\end{split}
\end{equation}
Using Lemma 3.4 in \cite{LT}, we can compute
\begin{equation}\label{ddbaru}
\frac{\sqrt{-1}}{2} \partial \overline{\partial} u(V, J V)=\frac{1}{4} J V(J V(u)).
\end{equation}
It follows
\begin{equation}\label{gradient int estimate}
\begin{split}
\frac{4}{p} \int_{M}\left|\nabla e^{-\frac{p}{2} u}\right|_{g}^{2} \omega^{n} \leq{}& \int_{M}\left[C e^{-pu+F+u}+\frac{1}{4} JVJV(u) e^{-pu+F+u}-e^{-pu}\right]\omega^{n}\\[2mm]
\leq{}& C \int_{M} e^{-pu+F+u} \omega^{n}+\int_{M}\frac{1}{4} JVJV(u) e^{-pu+F+u}\omega^{n}.
\end{split}
\end{equation}
Using lemma \ref{Zhu's lem}, we have $Z(u)=JV(u)$ is real-valued and bounded. Recall the identity
\begin{equation}
\begin{split}
&\operatorname{div}\left(e^{-pu+F+u} Z(u)Z\right)\\[2mm]
={}& e^{-pu+F+u} Z(u) \operatorname{div}(Z)+Z\left(e^{-pu+F+u} Z(u)\right)\\[2mm]
={}& e^{-pu+F+u} Z(u) \operatorname{div}(Z)+e^{-p u+F+u} Z(-pu+F+u) Z(u)\\[2mm]
&+e^{-pu+F+u} Z(Z(u)),
\end{split}
\end{equation}
where the divergence is taken with respect to the metric $\omega$.
Therefore
\begin{equation}
e^{-pu+F+u} Z(Z(u)) \leq \operatorname{div}\left(e^{-pu+F+u} Z(u)Z\right)+Cpe^{-pu+F+u}.
\end{equation}
Plugging this into \eqref{gradient int estimate} and using lemma \ref{Zhu's lem}, we obtain
\begin{equation}\label{gradient int estimate 2}
\int_{M} | \nabla e^{-\frac{p}{2}u}|_{g}^{2} \omega^{n} \leq Cp^{2} \int_{M} e^{-u(p-1)} \omega^{n}.
\end{equation}
Now we can apply the standard Moser iteration scheme. Using \eqref{gradient int estimate 2} and the Sobolev inequality, we have for $\beta=\frac{n}{n-1}>1$,
\begin{equation}
\begin{split}
\left(\int_{M} e^{-p \beta u} \omega^{n}\right)^{1 / \beta} \leq &C\left(\int_{M}\left| \nabla e^{-\frac{p}{2} u}\right|^{2} \omega^{n}+\int_{M} e^{-p u} \omega^{n}\right)\\[2mm]
\leq & C p^{2}\left(\int_{M} e^{-u(p-1)}+e^{-pu} \omega^{n}\right)\\[2mm]
\leq & C p^{2} \int_{M} e^{-u(p-1)}\omega^{n},
\end{split}
\end{equation}
where we have used the fact that $u\geq -C$. Thus
\begin{equation}
\|e^{-u}\|_{L^{p{\beta}}} \leq C^{\frac{1}{p}} p^{\frac{2}{p}}\|e^{-u}\|_{L^{p-1}}^{\frac{p-1}{p}}.
\end{equation}
We can iterate this estimate in a standard way to obtain
\begin{equation}
\|e^{-u}\|_{L^{\infty}}\leq C\|e^{-u}\|_{L^{p_{0}-1}}^{\frac{p_{0}-1}{p_{0}+\frac{1}{\beta-1}}},
\end{equation}
which is equivalent to
\begin{equation}
e^{-\left(p_{0}+\frac{1}{\beta-1}\right) \inf_{M} u} \leq C \int_{M} e^{-(p_{0}-1)u}\omega^{n}.
\end{equation}
Then we have
\begin{equation}\label{ieration eu}
\begin{split}
C \int_{M} e^{-(p_{0}-1)u}\omega^{n} \geq{}& e^{-\left(p_{0}+\frac{1}{\beta-1}\right)\inf_{M}u}\\[2mm]
={}& e^{-(p_{0}-1) \inf_{M}u} \;\; e^{-\left(1+\frac{1}{\beta-1}\right) \inf_{M} u}\\[2mm]
\geq{}& C e^{-(p_{0}-1) \inf_{M}u},
\end{split}
\end{equation}
where we have used the condition $\inf_{M}u \leq C_{0}$.
Let $q=p_{0}-1$, then \eqref{ieration eu} becomes
\begin{equation}
e^{-q \inf_{M}u} \leq C \int_{M}e^{-qu}\omega^{n}.
\end{equation}

The $L^{\infty}$ bound on $u$ now follow from the arguments of \cite{TW}. More precisely, we can adapt the arguments in \cite{TW} to estimate $\sup_{M} u$.

Now we consider the case $|\operatorname{div}(Z)|^2-\operatorname{Ric}(Z,Z) \geq 0$.
By (\ref{scalar V-soliton equation, Z,perturbation}), we have
\begin{equation}\label{Infimum estimate equation 1}
\begin{split}
\vol(M,\omega)
={}& \int_{M}(\omega+\ddbar u)^{n} \\
={}& \int_{M}(|Z|_{\ti{g}}^{2}+\ve)e^{F+u}\omega^{n} \\
\geq {}& e^{-C+\inf_{M}u}\int_{M}|Z|_{\ti{g}}^{2}\omega^{n}.
\end{split}
\end{equation}
For the term $\int_{M}|Z|_{\ti{g}}^{2}\omega^{n}$, using \eqref{ddbaru} and the divergence theorem, we can compute
\begin{equation}\label{int|Z|^2_u}
\begin{split}
\int_{M}|Z|_{\ti{g}}^{2}\omega^{n}
= {} & \int_{M}|Z|_{g}^{2}\omega^{n} + \frac{1}{4} \int_{M} Z\ov{Z}(u) \omega^{n} \\
= {} & \int_{M}|Z|_{g}^{2}\omega^{n} - \frac{1}{4} \int_M \ov{Z}(u) \operatorname{div}(Z) \omega^n \\
= {} & \int_{M}|Z|_{g}^{2}\omega^{n} + \frac{1}{4} \int_M u \Big(\ov{Z} (\operatorname{div}(Z))+{|\operatorname{div}(Z)|}^2\Big) \omega^n.
\end{split}
\end{equation}
Choosing normal coordinates, we compute
\begin{equation}\label{Ric(Z,barZ)}
\begin{split}
\ov{Z} (\operatorname{div}(Z))
={}& \ov{Z}^{j} \partial_{\ov{j}}(\partial_{i}Z^i+\Gamma_{i k}^{i} Z^k)\\
={}& \ov{Z}^{j} (\partial_{\ov j} \Gamma_{i k}^{i}) Z^k\\
={}& Z^k \ov{Z}^{j} (-g^{i \ov l} \operatorname{R}_{i \ov{l} k \ov{j}})\\
={}& - \operatorname{Ric}(Z, \ov{Z}),
\end{split}
\end{equation}
where we used $Z$ is holomorphic and formulaes in Section 2.

Plugging \eqref{Ric(Z,barZ)} into \eqref{int|Z|^2_u}, we obtain
\begin{equation}
\int_{M}|Z|_{\ti{g}}^{2}\omega^{n}= \int_{M}|Z|_{g}^{2}\omega^{n} + \frac{1}{4} \int_M u \Big(- \operatorname{Ric}(Z, \ov{Z})+{|\operatorname{div}(Z)|}^2\Big) \omega^n.
\end{equation}
Without loss of generality, we assume that $\inf_{M} u \geq 1$. Combining this with $|\operatorname{div}(Z)|^2-\operatorname{Ric}(Z,Z) \geq 0$, we get
\begin{equation}
\int_{M}|Z|_{\ti{g}}^{2}\omega^{n} \geq \int_{M}|Z|_{g}^{2}\omega^{n}=C'^{-1}.
\end{equation}
Substituting this into \eqref{Infimum estimate equation 1}, we obtain that there is a uniform constant $C_{0}$ such that
\begin{equation}
\inf_{M} u \leq C_{0}.
\end{equation}
\end{proof}

\section{Second order estimate}

In this section, we give the proof of Proposition \ref{Laplacian estimate}, Propositon \ref{Laplacian estimate Positive Curvature} and Propositon \ref{Laplacian estimate negative lambda}.
We shall follow the notations in Section 2 and use ordinary derivatives.

First, we prove the following lemma.
\begin{lemma}\label{LT Lemma}
Let Z be any holomorphic vector field on a K\"ahler manifold (M,g). Then, for any $\ve>0$,
\begin{equation*}
\ddb\log(|Z|_{g}^{2}+\ve)+\frac{R(Z,\ov{Z},\cdot,\cdot)}{|Z|_{g}^{2}+\ve} > 0,
\end{equation*}
where $R$ is the curvature of $g$.
\end{lemma}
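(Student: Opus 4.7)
The plan is to verify the pointwise non-negativity (as a Hermitian $(1,1)$-form) by a direct local computation that ultimately reduces to the Cauchy--Schwarz inequality. Note that holomorphicity of $Z$ is crucial: it will let us keep only those terms arising from $\partial_k Z^i$ (and its conjugate) after one takes $\partial_{\bar l}$ of the norm.

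First I would expand the logarithmic $\ddbar$ by the chain rule: at any point,
\begin{equation*}
(|Z|_g^2+\ve)\,\partial_k\partial_{\bar l}\log(|Z|_g^2+\ve)=\partial_k\partial_{\bar l}|Z|_g^2-\frac{\partial_k|Z|_g^2\cdot\partial_{\bar l}|Z|_g^2}{|Z|_g^2+\ve}.
\end{equation*}
Next I would fix a point $p\in M$ and choose normal holomorphic coordinates there so that $g_{i\bar j}(p)=\delta_{ij}$ and $\partial_k g_{i\bar j}(p)=0$; then the curvature formula from Section 2 gives $\partial_k\partial_{\bar l}g_{i\bar j}(p)=-R_{i\bar j k\bar l}$. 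Since $Z$ is holomorphic, $\partial_{\bar j}Z^i=0$, and a straightforward differentiation of $|Z|_g^2=g_{i\bar j}Z^i\bar Z^j$ yields, at $p$,
\begin{equation*}
\partial_k|Z|_g^2=\sum_i(\partial_k Z^i)\bar Z^i,\qquad \partial_{\bar l}|Z|_g^2=\sum_j Z^j\,\overline{\partial_l Z^j},
\end{equation*}
\begin{equation*}
\partial_k\partial_{\bar l}|Z|_g^2=-R_{i\bar j k\bar l}Z^i\bar Z^j+\sum_i(\partial_k Z^i)\overline{(\partial_l Z^i)}.
\end{equation*}
Combining these, the curvature term on the right of the displayed inequality cancels precisely the curvature contribution in $\partial_k\partial_{\bar l}|Z|_g^2$, and I am left with
\begin{equation*}
(|Z|_g^2+\ve)\Big[\partial_k\partial_{\bar l}\log(|Z|_g^2+\ve)+\tfrac{R_{i\bar j k\bar l}Z^i\bar Z^j}{|Z|_g^2+\ve}\Big]
=\sum_i(\partial_k Z^i)\overline{(\partial_l Z^i)}-\frac{A_k\,\overline{A_l}}{|Z|_g^2+\ve},
\end{equation*}
where $A_k:=\sum_i(\partial_k Z^i)\bar Z^i$.

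Finally I would contract with an arbitrary $(1,0)$-vector $v=v^k\partial_k$ at $p$ and set $W^i:=v^k\partial_k Z^i$. Then the right side becomes
\begin{equation*}
|W|^2-\frac{|\langle W,Z\rangle|^2}{|Z|_g^2+\ve},
\end{equation*}
where $|\cdot|$ and $\langle\cdot,\cdot\rangle$ are the standard Euclidean inner product in the chosen normal coordinates. By Cauchy--Schwarz, $|\langle W,Z\rangle|^2\leq|W|^2|Z|_g^2\leq|W|^2(|Z|_g^2+\ve)$, so the expression is bounded below by $\ve|W|^2/(|Z|_g^2+\ve)\geq 0$. Since $p$ was arbitrary, this proves the claimed non-negativity of the form. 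I should also note that strict positivity fails whenever $W=0$, i.e.\ when $v$ lies in the kernel of the holomorphic Jacobian of $Z$; so the inequality in the statement is really ``$\geq 0$'' in the sense of positive semi-definiteness.

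The computation is essentially mechanical once the setup is correct; the only delicate point is choosing normal coordinates so that the curvature formula takes its simplest form, and making sure to use holomorphicity of $Z$ consistently to kill off the $\partial_{\bar j}Z^i$ and $\partial_k\bar Z^j$ terms. There is no real obstacle beyond careful index bookkeeping and recognizing the final expression as a manifestation of Cauchy--Schwarz.
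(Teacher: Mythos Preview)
Your proof is correct and follows essentially the same route as the paper's: expand $\partial\bar\partial\log(|Z|_g^2+\ve)$ by the chain rule, compute $\partial\bar\partial|Z|_g^2$ in normal coordinates (picking up $-R(Z,\bar Z,\cdot,\cdot)$ plus the $|\nabla Z|^2$-type term), cancel the curvature, and finish with Cauchy--Schwarz on the residual Hermitian form. Your observation that the resulting bound is really $\ge 0$ (equality when $v$ lies in the kernel of the holomorphic Jacobian of $Z$) is accurate and in fact matches what the paper's own final line actually yields, namely $\frac{\ve\,|W|^2}{(|Z|_g^2+\ve)^2}$.
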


\begin{proof}
By direct calculation, in any normal coordinate system with respect to $\omega $ :
\begin{eqnarray*}
\partial _{\overline{j}} \partial _{i}(|Z|_{g}^{2}) &=&-R_{i\overline{j}k%
\overline{l}}Z^{k}\overline{Z^{l}}+g_{k\overline{l}}Z_{i}^{k}\overline{Z_{j}^{l}} \\
\partial _{\overline{j}} \partial _{i}(\log (|Z|_{g}^{2}+\varepsilon )) &=&%
\frac{\partial _{\overline{j}}\partial _{i}(|Z|_{g}^{2})}{%
|Z|_{g}^{2}+\varepsilon }-\frac{\partial _{i}(|Z|_{g}^{2})\partial _{%
\overline{j}}(|Z|_{g}^{2})}{(|Z|_{g}^{2}+\varepsilon )^{2}} \\
&=&-\frac{Rm(Z,\overline{Z},\partial _{i},\partial _{\overline{j}})}{%
|Z|_{g}^{2}+\varepsilon }+\frac{g_{k\overline{l}}Z_{i}^{k}\overline{Z_{j}^{l}%
}}{|Z|_{g}^{2}+\varepsilon }-\frac{g_{k\overline{l}}Z_{i}^{k}\overline{Z^{l}}%
g_{s\overline{t}}Z^{s}\overline{Z_{j}^{t}}}{(|Z|_{g}^{2}+\varepsilon )^{2}}.
\end{eqnarray*}

We need to show that: $\forall \xi ^{i},\xi ^{j}\in \mathbb{C},$%
\begin{equation*}
K:=\xi ^{i}\overline{\xi ^{j}}(\frac{g_{k\overline{l}}Z_{i}^{k}\overline{%
Z_{j}^{l}}}{|Z|_{g}^{2}+\varepsilon }-\frac{g_{k\overline{l}}Z_{i}^{k}%
\overline{Z^{l}}g_{s\overline{t}}Z^{s}\overline{Z_{j}^{t}}}{%
(|Z|_{g}^{2}+\varepsilon )^{2}})>0.
\end{equation*}
In fact,%
\begin{eqnarray*}
K &=&\frac{\sum_{i,j,k}\xi ^{i}Z_{i}^{k}\overline{\xi ^{j}}\overline{%
Z_{j}^{k}}}{|Z|_{g}^{2}+\varepsilon }-\frac{\sum_{i,j,k,s}\xi ^{i}Z_{i}^{k}%
\overline{Z^{k}}\overline{\xi ^{j}}Z^{s}\overline{Z_{j}^{s}}}{%
(|Z|_{g}^{2}+\varepsilon )^{2}} \\
&=&\frac{\sum_{k}|\sum_{i}\xi ^{i}Z_{i}^{k}|^{2}}{|Z|_{g}^{2}+\varepsilon }-%
\frac{|\sum_{i,k}\xi ^{i}Z_{i}^{k}\overline{Z^{k}}|^{2}}{(|Z|_{g}^{2}+%
\varepsilon )^{2}} \\
&\geq &\frac{\sum_{k}|\sum_{i}\xi ^{i}Z_{i}^{k}|^{2}}{|Z|_{g}^{2}+%
\varepsilon }-\frac{(\sum_{k}|\overline{Z^{k}}|^{2})(\sum_{k}|\sum_{i}\xi
^{i}Z_{i}^{k}|^{2})}{(|Z|_{g}^{2}+\varepsilon )^{2}} \\
&>&0.
\end{eqnarray*}
\end{proof}
\noindent We will make ubiquitous use of this lemma.

Now we are in a position to prove Proposition \ref{Laplacian estimate}.

\begin{proof}[Proof of Proposition \protect\ref{Laplacian estimate}]
We consider the following quantity
\begin{equation*}
Q=\log \text{tr}_{\omega }\tilde{\omega}-A u ,
\end{equation*}%
where $A$ is a constant to be determined later. Let $p$ be the maximum point
of $Q$. We choose the normal coordinate system (with respect to $\omega $)
centered at $p$ such that
\begin{eqnarray*}
g_{i \bar{j}}(p)=\delta_{i j} \quad  \text { and }& \quad dg(p)=0,\\
\tilde{g}_{i \bar{j}}(p)=\delta_{i j} \tilde{g}_{i \bar{i}}(p) \quad  \text { and }& \quad \tilde{g}_{1 \overline{1}}(p) \geqslant \tilde{g}_{1 \overline{1}}(p) \geqslant \cdots \geqslant \tilde{g}_{n \bar{n}}(p).
\end{eqnarray*}

For convenience, we use the following notation:
\begin{equation*}
\tilde{g}_{H,\varepsilon }^{i\overline{j}}=\tilde{g}^{i\overline{j}}
-\frac{Z^{i}\overline{Z}^{j}}{|Z|_{\tilde{g}}^{2}+\varepsilon }.
\end{equation*}
Define the linearized operator by
\begin{equation*}
\Delta _{\tilde{g}_{H,\varepsilon }}=\tilde{g}_{H,\varepsilon }^{i\overline{j}} \partial_{\ov{j}} \partial_i
\end{equation*}

We compute
\begin{equation}
\begin{split}
& \Delta _{\tilde{g}_{H,\varepsilon }}(\text{tr}_{\omega }\tilde{\omega}) \\%
[2mm]
={}& g^{k\overline{l}}\tilde{g}_{H,\varepsilon }^{i\overline{j}}\partial
_{\overline{j}}\partial _{i}\tilde{g}_{k\overline{l}}+\sum_{i,j,k,l}\tilde{g}%
_{H,\varepsilon }^{i\overline{j}}R_{i\overline{j}k\overline{l}}\tilde{g}_{k%
\overline{l}} \\[2mm]
={}& g^{k\overline{l}}\tilde{g}_{H,\varepsilon }^{i\overline{j}}(-\tilde{R}%
_{i\overline{j}k\overline{l}}+\tilde{g}^{s\overline{t}}\partial _{i}\tilde{g}%
_{s\overline{l}}\partial _{\overline{j}}\tilde{g}_{k\overline{t}%
})+\sum_{i,j,k,l}\tilde{g}_{H,\varepsilon }^{i\overline{j}}R_{i\overline{j}k%
\overline{l}}\tilde{g}_{k\overline{l}} \\[2mm]
={}& -g^{k\overline{l}}\tilde{g}_{H,\varepsilon }^{i\overline{j}}\tilde{R}_{i%
\overline{j}k\overline{l}}+g^{k\overline{l}}\tilde{g}_{H,\varepsilon }^{i%
\overline{j}}\tilde{g}^{s\overline{t}}\partial _{i}\tilde{g}_{s\overline{l}%
}\partial _{\overline{j}}\tilde{g}_{k\overline{t}}+\sum_{i,j,k,l}\tilde{g}%
_{H,\varepsilon }^{i\overline{j}}R_{i\overline{j}k\overline{l}}\tilde{g}_{k%
\overline{l}} \\
={}& -g^{k\overline{l}}\tilde{R}_{k\overline{l}}+g^{k\overline{l}}\frac{Z^{i}%
\overline{Z^{j}}}{|Z|_{\tilde{g}}^{2}+\varepsilon }\tilde{R}_{i\overline{j}k%
\overline{l}}+g^{k\overline{l}}\tilde{g}_{H,\varepsilon }^{i\overline{j}}%
\tilde{g}^{s\overline{t}} u _{s\overline{l}i} u _{k\overline{t}%
\overline{j}}+\sum_{i,j,k,l}\tilde{g}_{H,\varepsilon }^{i\overline{j}}R_{i%
\overline{j}k\overline{l}}\tilde{g}_{k\overline{l}}.
\end{split}
\label{Laplacian estimate equation 1}
\end{equation}%
Differentiating (\ref{scalar V-soliton equation, Z,perturbation}) twice, we obtain
\begin{equation*}
-\tilde{R}_{k\overline{l}}=\partial _{\overline{l}} \partial _{k} \log (|Z|_{\tilde{g}}^{2}+\varepsilon)+F_{k\overline{l}}-\lambda u_{k\overline{l}}-R_{k\overline{l}}.
\end{equation*}%
Substituting this into (\ref{Laplacian estimate equation 1}) and using Lemma %
\ref{LT Lemma}, it follows that%
\begin{eqnarray}
\Delta _{\tilde{g}_{H,\varepsilon }}(\text{tr}_{\omega }\tilde{\omega})
&\geq &{}\Delta (\log (|Z|_{\tilde{g}}^{2}+\varepsilon ))+g^{k\overline{l}}%
\frac{Z^{i}\overline{Z}^{j}}{|Z|_{\tilde{g}}^{2}+\varepsilon }\tilde{R}_{i%
\overline{j}k\overline{l}}+\sum_{i,j,s,k}\tilde{g}_{H,\varepsilon }^{i%
\overline{j}}\tilde{g}^{s\overline{s}} u _{s\overline{k}i} u _{k%
\overline{s}\overline{j}}  \notag \\
&&+\sum_{i,j,k,l}\tilde{g}_{H,\varepsilon }^{i\overline{j}}R_{i%
\overline{j}k\overline{l}}\tilde{g}_{k\overline{l}}+\Delta F-\lambda \Delta u-R  \notag \\
&\geq &{}\sum_{i,j,s,k}\tilde{g}_{H,\varepsilon }^{i\overline{j}}\tilde{g}^{s%
\overline{s}} u _{s\overline{k}i} u _{k\overline{s}\overline{j}%
}+\sum_{i,j,k,l}\tilde{g}_{H,\varepsilon }^{i\overline{j}}R_{i\overline{j}l%
\overline{k}}\tilde{g}_{k\overline{l}}-C.
\label{Laplacian estimate equation 11}
\end{eqnarray}
This implies
\begin{equation}
\begin{split}
\Delta _{\tilde{g}_{H,\varepsilon }}(\log (\text{tr}_{\omega }\tilde{\omega}%
))={}& \frac{\Delta _{\tilde{g}_{H,\varepsilon }}(\text{tr}_{\omega }\tilde{%
\omega})}{\text{tr}_{\omega }\tilde{\omega}}-\frac{\sum_{i,j,k,l}\tilde{g}%
_{H,\varepsilon }^{i\overline{j}} u _{k\overline{k}i} u _{l%
\overline{l}\overline{j}}}{(\text{tr}_{\omega }\tilde{\omega})^{2}} \\
\geq {}& \frac{\sum_{i,j,s,k}\tilde{g}_{H,\varepsilon }^{i\overline{j}}%
\tilde{g}^{s\overline{s}} u _{s\overline{k}i} u _{k\overline{s}%
\overline{j}}}{\text{tr}_{\omega }\tilde{\omega}}-\frac{\sum_{i,j,k,l}\tilde{%
g}_{H,\varepsilon }^{i\overline{j}} u _{k\overline{k}i} u _{l%
\overline{l}\overline{j}}}{(\text{tr}_{\omega }\tilde{\omega})^{2}} \\
& +\frac{\sum_{i,j,k,l}\tilde{g}_{H,\varepsilon }^{i\overline{j}}R_{i%
\overline{j}k\overline{l}}\tilde{g}_{k\overline{l}}}{\text{tr}_{\omega }%
\tilde{\omega}}-C,
\end{split}
\label{Laplacian estimate equation 2}
\end{equation}%
where we assume $\text{tr}_{\omega }\tilde{\omega}>1$ without loss of
generality.

First, we need to deal with the third order terms in (\ref%
{Laplacian estimate equation 2}). Since $\tilde{g}_{H,\varepsilon }^{i%
\overline{j}}$ is a positive matrix, there exists a Hermitian matrix $B$
such that
\begin{equation*}
\tilde{g}_{H,\varepsilon }^{i\overline{j}}=\sum_{s}B^{i\overline{s}}%
\overline{B^{j\overline{s}}}.
\end{equation*}%
We compute

\begin{equation*}
\begin{split}
\sum_{i,j,k,l}\tilde{g}_{H,\varepsilon }^{i\overline{j}} u _{k\overline{%
k}i} u _{l\overline{l}\overline{j}}={}& \sum_{s}|\sum_{i,k}B^{i%
\overline{s}} u _{k\overline{k}i}|^{2} \\
={}& \sum_{s}|\sum_{k}\sqrt{\tilde{g}_{k\overline{k}}}\sum_{i}\sqrt{\tilde{g}%
^{k\overline{k}}}B^{i\overline{s}} u _{k\overline{k}i}|^{2} \\
\leq {}& \text{tr}_{\omega }\tilde{\omega}\sum_{k,s}\tilde{g}^{k\overline{k}%
}|\sum_{i}B^{i\overline{s}} u _{k\overline{k}i}|^{2} \\
\leq {}& \text{tr}_{\omega }\tilde{\omega}\sum_{k,l,s}\tilde{g}^{k\overline{k%
}}|\sum_{i}B^{i\overline{s}} u _{k\overline{l}i}|^{2} \\
={}& \text{tr}_{\omega }\tilde{\omega}\sum_{i,j,k,l,s}\tilde{g}^{k\overline{k%
}}B^{i\overline{s}} u _{k\overline{l}i}\overline{B^{j\overline{s}}}%
 u _{l\overline{k}\overline{j}} \\
={}& \text{tr}_{\omega }\tilde{\omega}\sum_{i,j,k,l}\tilde{g}^{k\overline{k}}%
\tilde{g}_{H,\varepsilon }^{i\overline{j}} u _{k\overline{l}i} u
_{l\overline{k}\overline{j}},
\end{split}%
\end{equation*}
which implies
\begin{equation}
\frac{\sum_{i,j,s,k}\tilde{g}_{H,\varepsilon }^{i\overline{j}}\tilde{g}^{s%
\overline{s}} u _{s\overline{k}i} u _{k\overline{s}\overline{j}}}{%
\text{tr}_{\omega }\tilde{\omega}}-\frac{\sum_{i,j,k,l}\tilde{g}%
_{H,\varepsilon }^{i\overline{j}} u _{k\overline{k}i} u _{l%
\overline{l}\overline{j}}}{(\text{tr}_{\omega }\tilde{\omega})^{2}}>0.
\label{Laplacian estimate equation 3}
\end{equation}%

Next, we need to deal with the second order terms in (\ref{Laplacian
estimate equation 2}). We compute
\begin{eqnarray*}
&&\sum_{i,j,k,l}\tilde{g}_{H,\varepsilon }^{i\overline{j}}R_{i\overline{j}k%
\overline{l}}\tilde{g}_{k\overline{l}} \\
&=&\sum_{i,j,k,r}B^{i\overline{r}}\overline{B^{j\overline{r}}}R(\partial
_{i},\partial _{\overline{j}},\partial _{k},\partial _{\overline{k}})\tilde{g%
}_{k\overline{k}} \\
&=&\sum_{i,j,k,r}R(B^{i\overline{r}}\partial _{i},\overline{B^{j\overline{r}%
}\partial _{j}},\sqrt{\tilde{g}_{k\overline{k}}}\partial _{k},\overline{%
\sqrt{\tilde{g}_{k\overline{k}}}\partial _{k}}) \\
&\geq &-C\sum_{k,r}|\sum_{i} B^{i\overline{r}}\partial _{i}|^{2}|\sqrt{\tilde{g}_{k%
\overline{k}}}\partial _{k}|^{2} \\
&=&-C\sum_{k,r}\sum_{i}|B^{i\overline{r}}|^{2}|\sqrt{\tilde{g}_{k\overline{k}%
}}|^{2} \\
&=&-C\sum_{i}\tilde{g}_{H,\varepsilon }^{i\overline{i}}\text{tr}_{\omega }%
\tilde{\omega},
\end{eqnarray*}
which implies
\begin{equation}
\frac{\sum_{i,j,k,l}\tilde{g}_{H,\varepsilon }^{i\overline{j}}R_{i\overline{j%
}k\overline{l}}\tilde{g}_{k\overline{l}}}{\text{tr}_{\omega }\tilde{\omega}}%
\geq -C\sum_{i}\tilde{g}_{H,\varepsilon }^{i\overline{i}}.
\label{Laplacian estimate equation 8}
\end{equation}
Substituting (\ref{Laplacian estimate equation 3}) and (\ref{Laplacian
estimate equation 8}) into (\ref{Laplacian estimate equation 2}), we obtain
\begin{equation*}
\Delta _{\tilde{g}_{H,\varepsilon }}(\log \text{tr}_{\omega }\tilde{\omega}%
)\geq -C_{0}\sum_{i}\tilde{g}_{H,\varepsilon }^{i\overline{i}}-C,
\end{equation*}
where $C_{0}$, $C$ are uniform constants.

It is clear that
\begin{eqnarray*}
\Delta _{\tilde{g}_{H,\varepsilon }} u &=&\tilde{g}_{H,\varepsilon }^{i%
\overline{j}} u _{i\overline{j}} \\
&=&(\tilde{g}^{i\overline{j}}-\frac{Z^{i}\overline{Z}^{j}}{|Z|_{\tilde{g}%
}^{2}+\varepsilon })\tilde{g}_{i\overline{j}}-\tilde{g}_{H,\varepsilon }^{i%
\overline{j}}g_{i\overline{j}} \\
&=&n-\frac{|Z|_{\tilde{g}}^{2}}{|Z|_{\tilde{g}}^{2}+\varepsilon }-\sum_{i}%
\tilde{g}_{H,\varepsilon }^{i\overline{i}} \\
&\leq &n-\sum_{i}\tilde{g}_{H,\varepsilon }^{i\overline{i}}.
\end{eqnarray*}
Hence, after choosing $A=C_{0}+1$, we have
\begin{equation*}
\Delta _{\tilde{g}_{H,\varepsilon }}(\log \text{tr}_{\omega }\tilde{\omega}%
-A u )\geq \sum_{i}\tilde{g}_{H,\varepsilon }^{i\overline{i}}-C.
\end{equation*}
Applying the maximum principle, we see that
\begin{equation}
\sum_{i}\tilde{g}^{i\overline{i}}-\frac{|Z|_{g}^{2}}{|Z|_{\tilde{g}%
}^{2}+\varepsilon }=\sum_{i}\tilde{g}_{H,\varepsilon }^{i\overline{i}}\leq C,
\label{Laplacian estimate equation 4}
\end{equation}
which implies
\begin{equation*}
\sum_{i}\tilde{g}^{i\overline{i}}
\leq \frac{|Z|_{g}^{2}}{|Z|_{\tilde{g}}^{2}+\varepsilon }+C
\leq \frac{|Z|_{g}^{2}}{\tilde{g}_{i\overline{i}%
}|Z^{i}|^{2}+\ve}+C
\leq \frac{|Z|_{g}^{2}}{\tilde{g}_{n\overline{n}}|Z|_{g}^{2}+\ve}+C
=\tilde{g}^{n\overline{n}}+C.
\end{equation*}
Thus, for $i\leq n-1$, we obtain
\begin{equation}
\tilde{g}^{i\overline{i}}\leq C.  \label{Laplacian estimate equation 7}
\end{equation}

Recalling (\ref{scalar V-soliton equation, Z,perturbation}), it is clear that
\begin{equation}
\prod_{i=1}^{n}\tilde{g}_{i\overline{i}}\leq C(|Z|_{\tilde{g}%
}^{2}+\varepsilon ).  \label{Laplacian estimate equation 6}
\end{equation}%
Using (\ref{Laplacian estimate equation 4}) and (\ref{Laplacian estimate
equation 6}), we obtain
\begin{equation}
\prod_{i=1}^{n-1}\tilde{g}_{i\overline{i}}=\tilde{g}^{n\overline{n}%
}\prod_{i=1}^{n}\tilde{g}_{i\overline{i}}\leq \sum_{i=1}^{n}\tilde{g}^{i%
\overline{i}}C(|Z|_{\tilde{g}}^{2}+\varepsilon )\leq C|Z|_{\tilde{g}}^{2}+C.
\label{Laplacian estimate equation 9}
\end{equation}%
Combining this with (\ref{Laplacian estimate equation 7}), we have
\begin{equation*}
\tilde{g}_{1\overline{1}}=\frac{\prod_{i=1}^{n-1}\tilde{g}_{i\overline{i}}}{%
\prod_{i=2}^{n-1}\tilde{g}_{i\overline{i}}}\leq C|Z|_{\tilde{g}}^{2}+C,
\end{equation*}%
which implies
\begin{equation*}
n+\Delta  u \leq n\tilde{g}_{1\overline{1}}\leq C|Z|_{\tilde{g}}^{2}+C,
\end{equation*}%
as required.
\end{proof}

Now we are in a position to prove Proposition \ref{Laplacian estimate
Positive Curvature}.

\begin{proof}[Proof of Proposition \protect\ref{Laplacian estimate
Positive Curvature}]
Suppose the holomorphic bisectional curvature $\mathrm{BK}\geq D>0$.
Let $q$ be the maximum point of $\text{tr}_{\omega }\tilde\omega$.
We choose the normal coordinate system (with respect to $\omega $) centered
at $q$ such that
\begin{equation*}
\tilde{g}_{i\overline{j}}(q)=\delta _{ij}\tilde{g}_{i\overline{i}}(q)~\text{%
~and~}~\tilde{g}_{1\overline{1}}(q)\geq \tilde{g}_{1\overline{1}}(q)\geq
\cdots \geq \tilde{g}_{n\overline{n}}(q).
\end{equation*}

Recalling (\ref{Laplacian estimate equation 11}), we have
\begin{eqnarray*}
\Delta _{\tilde{g}_{H,\varepsilon }}(\text{tr}_{\omega }\tilde{\omega})
&\geq &{}{}\sum_{i,j,s,k}\tilde{g}_{H,\varepsilon }^{i\overline{j}}\tilde{g}%
^{s\overline{s}} u _{s\overline{k}i} u _{k\overline{s}\overline{j}%
}+\sum_{i,j,k,l}\tilde{g}_{H,\varepsilon }^{i\overline{j}}R_{i\overline{j}l%
\overline{k}}\tilde{g}_{k\overline{l}}-C \\
&\geq &\sum_{i,j,k,l}\tilde{g}_{H,\varepsilon }^{i\overline{j}}R_{i\overline{%
j}l\overline{k}}\tilde{g}_{k\overline{l}}-C.
\end{eqnarray*}
Similar to the process of deriving (\ref{Laplacian estimate equation 8}) in Proposition \ref{Laplacian estimate}, we can write
\begin{equation*}
\tilde{g}_{H,\varepsilon }^{i\overline{j}}=\sum_{s}B^{i\overline{s}}%
\overline{B^{j\overline{s}}}.
\end{equation*}
Then we can estimate
\begin{eqnarray*}
&&\sum_{i,j,k,l}\tilde{g}_{H,\varepsilon }^{i\overline{j}}R_{i\overline{j}k%
\overline{l}}\tilde{g}_{k\overline{l}} \\
&=&\sum_{i,j,k,r}B^{i\overline{r}}\overline{B^{j\overline{r}}}R(\partial
_{i},\partial _{\overline{j}},\partial _{k},\partial _{\overline{k}})\tilde{g%
}_{k\overline{k}} \\
&=&\sum_{i,j,k,r}R(B^{i\overline{r}}\partial _{i},\overline{B^{j\overline{r}%
}\partial _{j}},\sqrt{\tilde{g}_{k\overline{k}}}\partial _{k},\overline{%
\sqrt{\tilde{g}_{k\overline{k}}}\partial _{k}}) \\
&\geq &D\sum_{k,r}|\sum_{i} B^{i\overline{r}}\partial _{i}|^{2}|\sqrt{\tilde{g}_{k%
\overline{k}}}\partial _{k}|^{2} \\
&=&D\sum_{k,r}\sum_{i}|B^{i\overline{r}}|^{2}|\sqrt{\tilde{g}_{k\overline{k}%
}}|^{2} \\
&=&D\sum_{i}\tilde{g}_{H,\varepsilon }^{i\overline{i}}\text{tr}_{\omega }%
\tilde{\omega},
\end{eqnarray*}
which implies
\begin{equation*}
\Delta _{\tilde{g}_{H,\varepsilon }}(\text{tr}_{\omega }\tilde{\omega})\geq
D\sum_{i}\tilde{g}_{H,\varepsilon }^{i\overline{i}}(n+\Delta  u )-C.
\end{equation*}
At $q$, applying the maximum principle, we see that%
\begin{equation}
\sum_{i}\tilde{g}_{H,\varepsilon }^{i\overline{i}}(n+\Delta  u )\leq C.
\label{Laplacian estimate equation 10}
\end{equation}

We split up into different cases. Let B be a constant to be determined later.

\begin{case}
$\sum_{i}\tilde{g}_{H,\varepsilon }^{i\overline{i}}\geq \frac{1}{B}.$
\end{case}

This is a easy case. From (\ref{Laplacian estimate equation 10}), we get $%
(n+\Delta  u )\leq BC.$

\begin{case}
$\sum_{i}\tilde{g}_{H,\varepsilon }^{i\overline{i}}\leq \frac{1}{B}.$
\end{case}

In this case, we have
\begin{equation}
\sum_{i}\tilde{g}^{i\overline{i}}-\frac{|Z|_{g}^{2}}{|Z|_{\tilde{g}}^{2}
+\varepsilon }\leq \frac{1}{B},
\end{equation}
which implies
\begin{equation*}
\sum_{i}\tilde{g}^{i\overline{i}}\leq \frac{|Z|_{g}^{2}}{|Z|_{\tilde{g}}^{2}+\varepsilon }+\frac{1}{B}\leq \tilde{g}^{n\overline{n}}+\frac{1}{B}.
\end{equation*}%
Thus, for $i\leq n-1$, we obtain
\begin{equation}
\tilde{g}^{i\overline{i}}\leq \frac{1}{B}.
\end{equation}

Recalling (\ref{scalar V-soliton equation, Z,perturbation}), it is clear that
\begin{equation}
\prod_{i=1}^{n}\tilde{g}_{i\overline{i}}\leq C(|Z|_{\tilde{g}%
}^{2}+\varepsilon ).
\end{equation}
We obtain
\begin{equation}
\prod_{i=1}^{n-1}\tilde{g}_{i\overline{i}}=\tilde{g}^{n\overline{n}%
}\prod_{i=1}^{n}\tilde{g}_{i\overline{i}}\leq \sum_{i=1}^{n}\tilde{g}^{i%
\overline{i}}C(|Z|_{\tilde{g}}^{2}+\varepsilon )\leq \frac{C}{B}|Z|_{\tilde{g}}^{2}+C.
\end{equation}
Then we have
\begin{equation*}
\frac{1}{B}\geq \sum_{i=1}^{n-1}\tilde{g}^{i\overline{i}}=\sum_{i=1}^{n-1}%
\frac{1}{\tilde{g}_{i\overline{i}}}\geq (\frac{\sum_{i=1}^{n-1}\tilde{g}_{i%
\overline{i}}}{\prod\nolimits_{i=1}^{n-1}\tilde{g}_{i\overline{i}}})^{\frac{1%
}{n-2}}\geq (\frac{\sum_{i=1}^{n-1}\tilde{g}_{i\overline{i}}}{\frac{C}{B}%
|Z|_{\tilde{g}}^{2}+C})^{\frac{1}{n-2}}.
\end{equation*}
Thus%
\begin{eqnarray*}
\tilde{g}_{1\overline{1}} &\leq &\sum_{i=1}^{n-1}\tilde{g}_{i\overline{i}%
}\leq \frac{C}{B^{n-1}}|Z|_{\tilde{g}}^{2}+\frac{C}{B^{n-2}} \\
&\leq &\frac{C}{B^{n-1}}|Z|_{g}^{2}\tilde{g}_{1\overline{1}}+\frac{C}{B^{n-2}}.
\end{eqnarray*}
Choosing
\begin{equation*}
B=(2C\sup_{M}|Z|_{g}^{2})^{\frac{1}{n-1}},
\end{equation*}
we obtain $\tilde{g}_{1\overline{1}}\leq C$, which implies $(n+\Delta
 u )\leq C.$
\end{proof}

Now we give the proof of Proposition \ref{Laplacian estimate negative lambda}

\begin{proof}[Proof of Proposition \protect\ref{Laplacian estimate negative lambda}]
When $\lambda =-1$, equation \eqref{scalar V-soliton equation, Z,perturbation} becomes
\begin{equation}  \label{V-soliton equation nagetive lambda}
(\omega+\sqrt{-1} \partial \overline{\partial} u)^{n}=(|Z|_{\tilde{g}%
}^{2}+\varepsilon)e^{F+u}\omega^{n}.
\end{equation}
Let $p$ be the maximum point of $|Z|_{\ti{g}}^{2}$. Around $p$, we choose holomorphic coordinates such that
\begin{equation*}
\ti{g}_{i\ov{j}}(p)=\delta_{ij} \text{~and~} d\ti{g}_{i\ov{j}}(p)=0.
\end{equation*}
We compute at p,
\begin{equation*}
\begin{split}
\Delta_{\ti{g}_{H,\ve}}(|Z|_{\ti{g}}^{2})
= {} & \ti{g}_{H,\ve}^{i\ov{j}}Z^{k}\ov{Z}^{l}\de_{\ov{j}}\de_{i}\ti{g}_{k\ov{l}}
+\ti{g}_{H,\ve}^{i\ov{j}}\ti{g}_{k\ov{l}}Z_{i}^{k}\ov{Z}_{j}^{l} \\
\geq {} & -\left(\ti{g}^{i\ov{j}}-\frac{Z^{i}\ov{Z}^{j}}{|Z|_{\ti{g}}^{2}+\ve}\right)
\ti{R}_{i\ov{j}k\ov{l}}Z^{k}\ov{Z}^{l} \\
= {} & -\ti{R}_{k\ov{l}}Z^{k}\ov{Z}^{l}+
\frac{Z^{k}\ov{Z}^{l}Z^{i}\ov{Z}^{j}\ti{R}_{i\ov{j}k\ov{l}}}{|Z|_{\ti{g}}^{2}+\ve}.
\end{split}
\end{equation*}
Differentiating \eqref{V-soliton equation nagetive lambda} twice, we see that
\begin{equation*}
\begin{split}
& -\ti{R}_{k\ov{l}}Z^{k}\ov{Z}^{l} \\[1.5mm]
= {} & Z^{k}\ov{Z}^{l}\de_{\ov{l}}\de_{k}\log(|Z|_{\ti{g}}^{2}+\ve)
+Z^{k}\ov{Z}^{l}F_{k\ov{l}}
+Z^{k}\ov{Z}^{l}u_{k\ov{l}}-R_{k\ov{l}}Z^{k}\ov{Z}^{l}. \\[1.5mm]
\end{split}
\end{equation*}
Using Lemma \ref{LT Lemma}, it follows that
\begin{equation*}
\Delta_{\ti{g}_{H,\ve}}(|Z|_{\ti{g}}^{2}) \geq |Z|_{\ti{g}}^{2}-C.
\end{equation*}
Applying the maximum principle, we obtain the desired estimate.
\end{proof}

\end{document}